\newtheorem{theorem}{Theorem}[section]
\newtheorem{lemma}[theorem]{Lemma}
\newtheorem{cor}[theorem]{Corollary}
\theoremstyle{definition}
\newtheorem{definition}[theorem]{Definition}
\theoremstyle{remark}
\newcommand{\Z}{\mathbb{Z}}
\newcommand{\mli}[1]{\mathit{#1}}
\newcommand{\mc}{\mathcal}
\newcommand{\mb}{\mathbb}
\DeclareMathOperator{\Tr}{Tr}
\begin{document}

\begin{frontmatter}



\title{Cauchy Identities for the Characters of the Compact Classical Groups}


\author{Amir Sepehri\tnoteref{thanks}}
\tnotetext[thanks]{The author is supported by a Weiland Graduate Fellowship in the School of Humanities and Sciences, Stanford University.}

\ead{asepehri@stanford.edu}

\address{Department of Statistics, Sequoia Hall, Stanford, CA 94305, USA}

\begin{abstract}
Motivated by statistical applications, this paper introduces Cauchy identities for characters 
of the compact classical groups. These identities generalize the well-known Cauchy identity 
for characters of the unitary group, which are Schur functions of symmetric function theory. 
Application to statistical hypothesis testing is briefly sketched.
\end{abstract}

\begin{keyword}
Cauchy identity \sep Character theory \sep Classical groups \sep Hypothesis testing

\MSC 43A75 \sep 43A45 \sep 62G10 
\end{keyword}

\end{frontmatter}


\section{Introduction} 
Consider generating uniformly random three dimensional rotations, i.e.\ elements of $\mli{SO}(3)$. The standard way of generating a random three dimensional rotation is to rotate uniformly along the north pole, and then move the north pole to a uniform point on the sphere. This is a special case of the famous subgroup algorithm of \citet{diaconis1987subgroup}. However, a very common and easy to make mistake is to perform this algorithm in the reverse order; that is, first move the north pole to a random point on the sphere, and then rotate uniformly along that point. This can be rephrased in terms of axis-angle representation of three dimensional rotations. A well known fact, due to Euler, asserts that any three dimensional rotation is a pure rotation along a single fixed axis. It is common to think that the uniform distribution of the rotation induces uniform distributions on the axis and the angle, independently. However, this leads to the naive algorithm above. For instance, a paper with sampling as its main concern mentions this as an `intuitively correct' way of sampling uniformly from $\mli{SO}(3)$; even though they use another correct algorithm in a different parameterization of rotations \citep{kuffner2004effective}. Figure \ref{fig:sphere} below shows samples generated using the two algorithms above. As it can be seen, it is not easy to tell the difference between the two only by looking at this plot. However, a more careful inspection of Figure \ref{fig:sphere} may suggest that the naive sampling has more rotations with a small angle, that is, rotations with the red and black arrows `close' to each other. For a more formal inspection, consider only the rotation angles, as represented in Figure \ref{fig:hist}. Figure \ref{fig:hist} illustrates the scatter-plot of the angles for the two samplers as well as a histogram for each. It is clear that the underlying distributions are different. The scatter-plot shows that the angle for uniform sampler has a higher density around $\theta= \pi$ and a lower density around $\theta =0$. This makes intuitive sense because all the rotations with small angles are similar to the identity regardless of the axis. Therefore, a uniform angle distribution results in an oversampling of rotations closer to the identity. This is the case for the naive sampler. For a more rigorous assessment of the differences between the two samplers, formal statistical hypothesis tests can be used. If only the angle of rotation is of concern, the problem becomes a one dimensional goodness of fit testing problem. This is a classical problem with an enormous literature around it, including several volumes of textbooks. In many problems, it may not be true that the angle distribution is different between two samplers; even if it is, it might be hard to guess that by looking at the data.
\begin{figure}[t]
  \centering
  \begin{minipage}[b]{0.4\textwidth}
    \includegraphics[width=\textwidth]{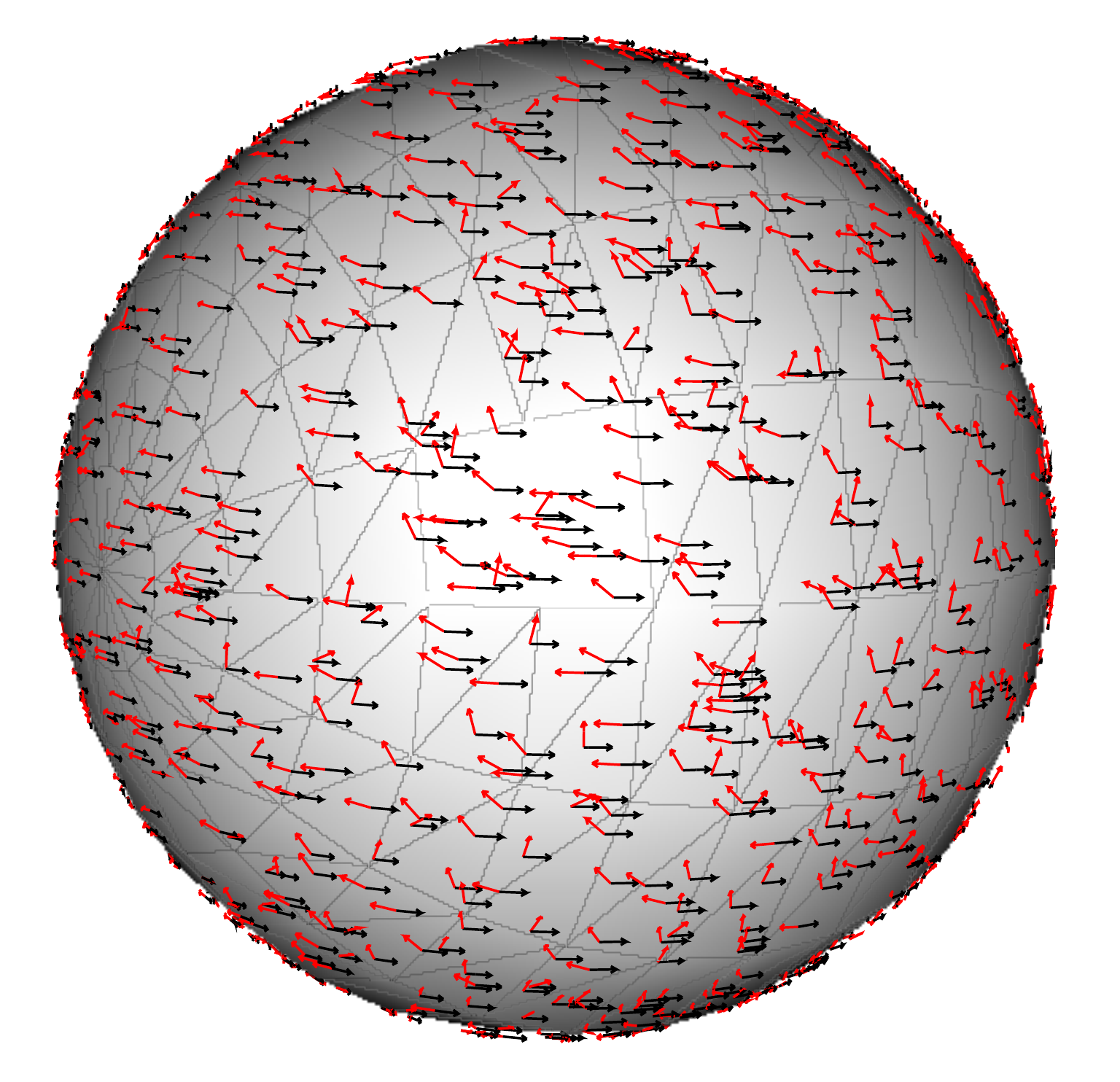}
    \caption*{{\scriptsize (a) Uniform sampling}}
  \end{minipage}
  \hfill
  \begin{minipage}[b]{0.4\textwidth}
    \includegraphics[width=\textwidth]{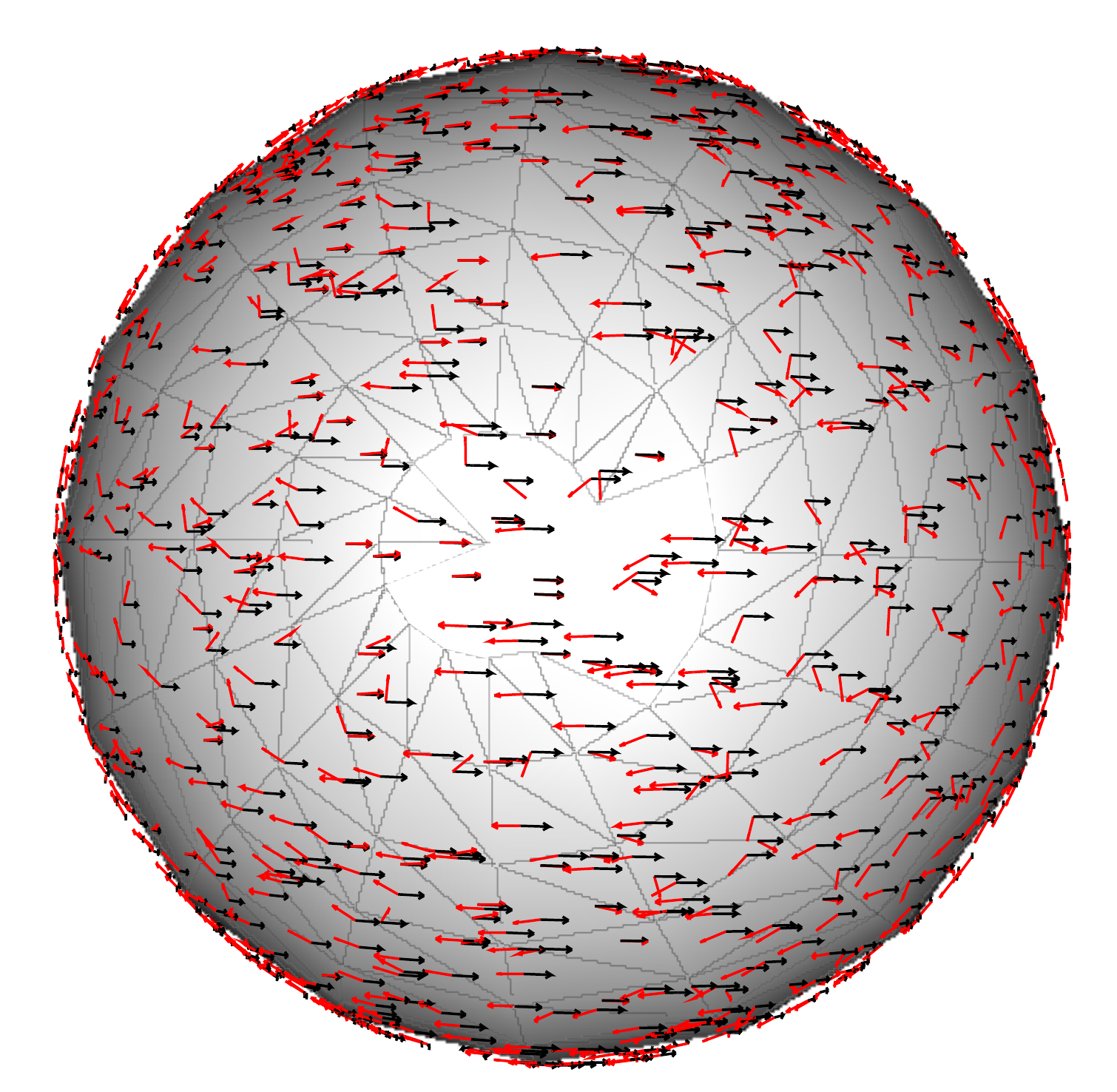}
    \caption*{{\scriptsize (b) Naive sampling}}
  \end{minipage}
  \caption{Samples from the uniform distribution (a) and naive sampling (b). Each rotation is represented by its axis and angle. The point on the sphere represents the axis, the angle between the red and black arrows show the angle (black arrow corresponds to the rotation with angle zero, i.e.\ the identity). Each plot is based on $N = 1500$ samples.}
  \label{fig:sphere}
\end{figure}
\begin{figure}[h!]
  \centering
  \begin{minipage}[b]{0.35\textwidth}
   \includegraphics[width=\textwidth]{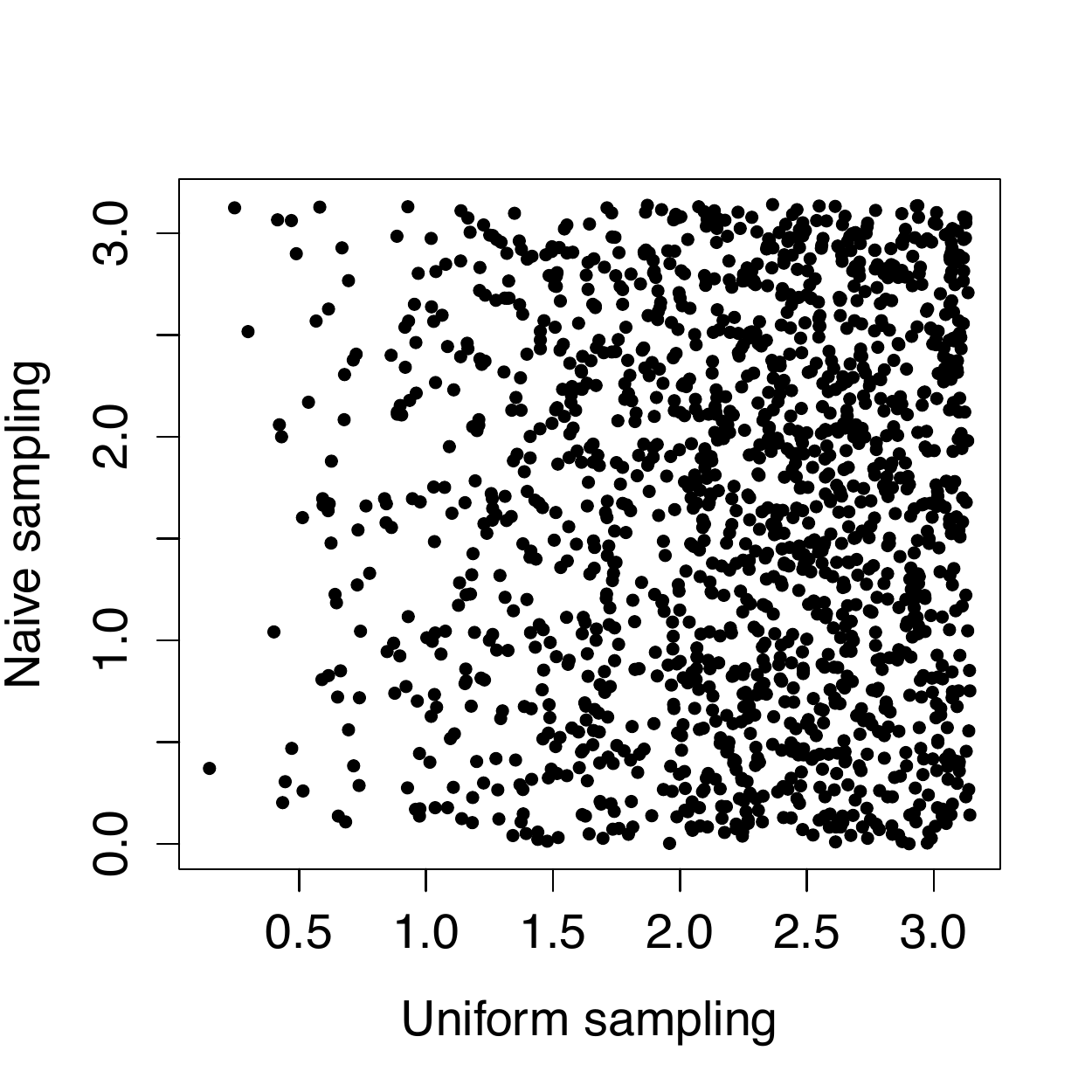}
    \caption*{}
  \end{minipage}
  \hfill
  \begin{minipage}[b]{0.62\textwidth}
  \centering
   \begin{subfigure}[b]{0.6\textwidth}
   \includegraphics[width=\textwidth]{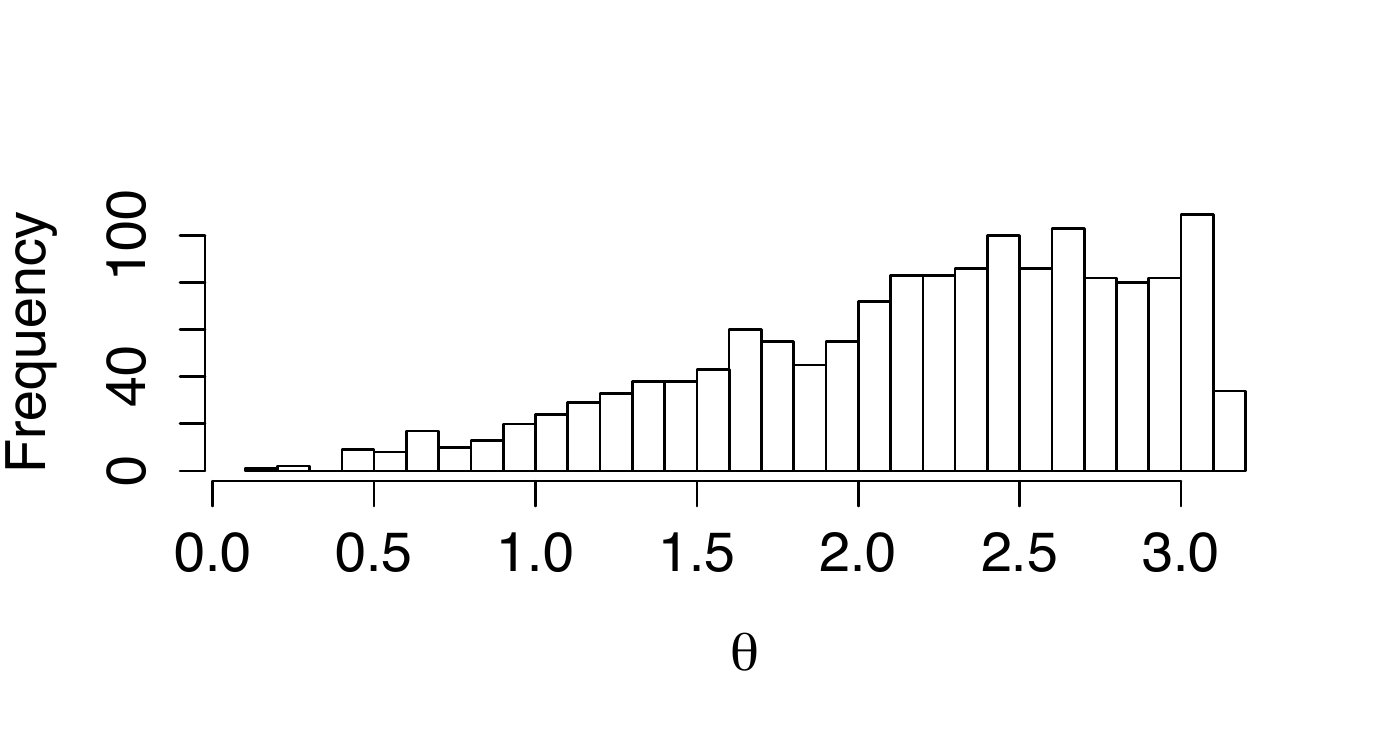}
   \end{subfigure}
   \begin{subfigure}[b]{0.6\textwidth}
   \includegraphics[width=\textwidth]{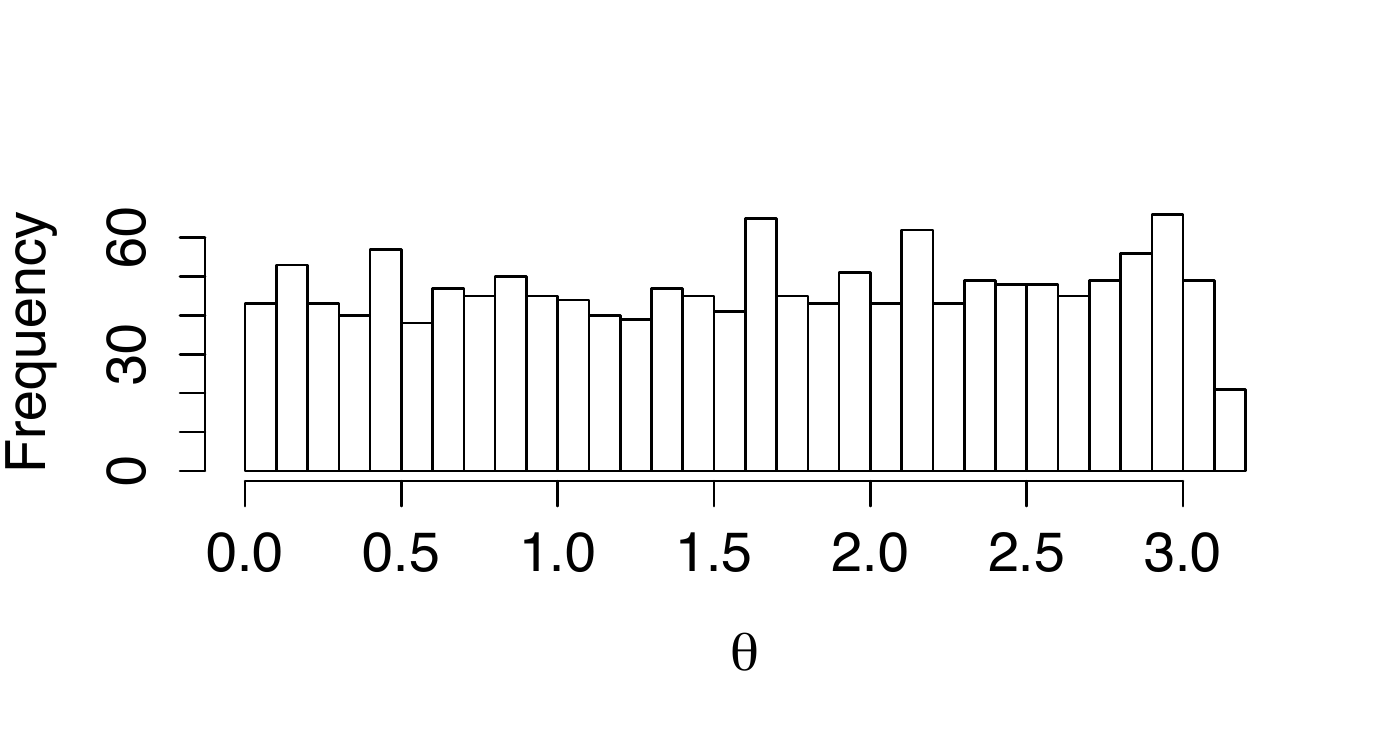}
  \end{subfigure}
  \end{minipage}
  \caption{Rotation angles. \textit{Left}: the scatter-plot of rotation angles for the uniform versus the naive sampler. \textit{Top right}: the histogram of rotation angles for the uniform sampler. \textit{Bottom right}: the histogram of rotation angles for the naive sampler. Each plot is based on $N = 1500$ samples.}
  \label{fig:hist}
\end{figure}
A more natural problem is to test if the full distributions on $\mli{SO}(3)$ are equal. The problem of goodness of fit for rotation data has been studied extensively, and there are various tests available. Two commonly used tests are Downs' generalization of the Rayleigh test \citep{downs1972orientation}  and
Prentice's generalization of Gin\'e's $G_n$ test \citep{prentice1978invariant, gine1975invariant}. For more details and references to the literature, see \citet[Section 13.2.2]{mardiadirectional}.
Given a sample $x_1,\ldots,x_n \in \mli{SO}(3)$, the generalized Rayleigh statistic is given by
\begin{align*}
T_R = 3N \Tr(\bar{X}^T\bar{X}), \; \; \; \bar{X} = \frac{1}{N} \sum_{i=1}^N x_i  .
\end{align*}
The value of $T_R$ for the naive sampler based on the sample of size $N=1500$ (used to generate figures) is $1379.452$ which corresponds to $p$-value of virtually $0$ based on $1000$ simulations of the test value under the uniform distribution. Note that $T_R$ is asymptotically distributed as a $\chi^2_9$ which yields an asymptotic $p$-value consistent with the simulation-based $p$-value. This is not surprising because the difference between the two distributions is very significant. In fact, only $N= 20$ observations yield an asymptotic $p$-value of less than $0.002$ and simulated $p$-value of $0.003$.

More generally, data from manifolds, including $\mli{SO}(3)$, appear naturally in many applications. The general problem of testing for uniformity on manifolds has been studied by various researchers. \citet{gine1975invariant}, in one of the most notable contributions in this area, introduced a method to construct non-parametric statistical tests of uniformity on compact Riemannian manifolds. These tests are called Sobolev tests, corresponding to each Sobolev norm $\|.\|_s$. The case $s=0$ is briefly sketched below. Let $M$ be a compact Riemannian manifold, $\mu$ the uniform measure on $M$, and $\Delta$ the Laplace-Beltrami operator (Laplacian) of $M$ acting on the space of Schwartz distributions by duality. Denote by $E_k$ the $k$-th invariant eigenspace of $\Delta$ with eigenvalue $\sigma_k$. Let $\{f_i\}_{i=0}^\infty$ be an orthonormal basis of eigenfunctions for the Laplacian. Consider the following hypothesis testing problem. Given observations $x_1,\ldots, x_n \in M$, drawn independently from a probability measure $\nu$, test whether $\nu = \mu$. Let $\nu_n$ be the empirical measure $\nu_n = \frac{1}{n}\sum \delta_{x_i}$. For a sequence of positive weights $\bm{\alpha} = (\alpha_1,\alpha_2,\ldots)$, with $\sup |\alpha_k \sigma_k^s|<\infty$ for some $s > \frac{1}{2}\dim M$, define the test statistic $T_n^{\bm{\alpha}}$ as
\begin{align*}
T_n^{\bm{\alpha}}(\bm{x}) = n \sum_{k=1}^\infty \alpha_k \sum_{f_i\in E_k} \left[\int_M f_i\, d(\nu_n(\bm{x})-\mu)\right]^2,
\end{align*}
i.e.\ a weighted sum of the Fourier coefficients of $\nu_n(\bm{x})-\mu$ with respect to the orthonormal system $\{f_i\}$ with weights depending only on the eigenspaces. See \cite{gine1975invariant} for more details, including statistical properties of the Sobolev tests.

Although the program introduced in \cite{gine1975invariant} constructs statistical tests with desirable statistical properties, there remains substantial work to be done to carry the program out for any particular example. Several authors have studied and derived Sobolev tests for different examples including circular and directional data, tests of symmetry, and unitary eigenvalues; see \cite{prentice1978invariant,wellner1979permutation,jupp1983sobolev,jupp1985sobolev,
hermans1985new,baringhaus1991testing,sengupta2001optimal,coram2003new}.

\cite{coram2003new} introduce statistical tests of 
correspondence between zeros of the Riemann zeta function and eigenvalues of random unitary 
matrices. They incorporate Gin\'e's program for the eigenvalue distribution, $\mu$, induced by the Haar measure on the unitary group $U(n)$.
The orthonormal basis for $\mc{L}^2(\mu)$ is given by the characters $\chi_\lambda$ of the unitary group. The weights are chosen as $\alpha_\lambda = z^{|\lambda|}$ for a parameter $0<z<1$.  Given observations $x_1,\ldots, x_N$ , the test statistic is given by
\begin{align*}
T_N^z &= \sum_{\lambda\neq 0} z^{|\lambda|} \left|\frac{1}{N}\sum_{i=1}^N \chi_\lambda(x_i)\right|^2\\
			&= \frac{1}{N^2} \sum_{i,j=1}^N \sum_{\lambda\neq 0} z^{|\lambda|}\chi_\lambda(x_i)\chi_\lambda(x_j).
\end{align*}
 To compute $T_N^z$, a closed form for the inner sum on the right hand side is needed. The Cauchy identity provides a closed form for 
 \begin{align}\label{Exp}
\sum_{\lambda\neq 0} z^{|\lambda|} \chi_\lambda(g)\chi_\lambda(h),
\end{align}
as given below in (\ref{eq:CauchyChar2}).

\cite{sepehri2016test} considers a similar goodness-of-fit testing problem on $\mli{SO}(2m+1)$. Let $\mu$ be the eigenvalue distribution induced by the Haar measure on $\mli{SO}(2m+1)$. Given data $\bm{x}_1,\ldots,\bm{x}_N$ drawn independently from a measure $\nu$, test if $\nu = \mu$ (note that each $\bm{x}_i$ is a vector of eigenvalues). Following the Gin\'e's program, the author considers the test based on
\begin{align*}
S^z_N &= \sum_{\lambda\ne 0} z^{|\lambda|} \left|\frac{1}{N} \sum_{i=1}^N so_\lambda(g_i)\right|^2,\\
			&= \frac{1}{N^2}\sum_{i,j=1}^N \sum_{\lambda\ne 0} z^{|\lambda|} so_\lambda(g_i) so_\lambda(g_j),
\end{align*}
where $so_\lambda$ is the character of $\mli{SO}(2m+1)$ corresponding to the partition $\lambda$, and $g_i \in \mli{SO}(2m+1)$ is a rotation with eigenvalue $\bm{x}_i$.

To use the test based on $S^z_N$, a closed form for the inner sum is needed. The Cauchy identity (\ref{eq:CauchySOodd}) for the orthogonal group is derived to address this problem. It suggests a tractable method to compute $S^z_N$. We refer the reader to \cite{sepehri2016test} for more details and statistical properties of the test based on $S^z_N$, as well as numerical results on synthetic and real data.

There is a sizable literature on generalizations of the Cauchy identity to larger classes of orthogonal polynomials. A major line of research focuses on generalizing the Cauchy identity to more general families of orthogonal polynomials, such as Hall-Littlewood polynomials \cite{macdonald1995symmetric,betea2016refined,warnaar2008bisymmetric,kirillov1999q, vuletic2009generalization}. Although the identities introduced in these works generalize the Cauchy identity to extended families of orthogonal polynomials, which specialize to Schur function, they do not offer any identity that results is a closed form for (\ref{Exp}). 

There is also substantial work concerning generalization of the Cauchy identity to \textit{generalized Schur functions}, which include the characters of the compact groups \cite{sundaram1986combinatorics,sundaram1990cauchy,sundaram1990orthogonal,fu2009non,brent2016discrete,okada1998applications,krattenthaler2001advanced}. The most related works among these to our case of interest seem to be \cite{sundaram1986combinatorics,sundaram1990cauchy,sundaram1990orthogonal}. They introduced Cauchy-type identities for the orthogonal and symplectic groups which present closed form expressions for 
\begin{align*}
\sum_{\lambda} s_\lambda (x) sp_\lambda (y^{\pm}) \quad \text{and} \quad \sum_{\lambda} s_\lambda (x) so_{2n+1,\lambda} (y^{\pm}) .
\end{align*}

All of the identities introduced in this part of the literature involves series where each term is the product of a Schur function and a generalized Schur function; whereas, what is needed in the application of interest in this paper is an identity that involves series in which each term is a product of two generalized Schur functions of the same type, as in (\ref{Exp}) .

None of the aforementioned work gives closed form expressions for (\ref{Exp}). However, it was brought to our attention that a closed form expression for a similar quantity for the symplectic group has recently appeared in an independent work \cite[Remark 7]{wheeler2016refined}. In particular, it proves our identity (\ref{eq:CauchySp}) concerning the symplectic group. The author was not aware of that work while writing this paper, and proof techniques are different. To the best of our knowledge, closed form expressions for (\ref{Exp}) have not appeared in the literature for the orthogonal groups of odd and even dimensions (Type B and D root systems). The main contribution of the current paper is to derive and prove such a generalization for other root systems.

\section{Background} 
The Cauchy identity is a well-known result about characters of the unitary group. 
For two sets of variables, $x_1,\ldots, x_n$ and $y_1,\ldots, y_m$, the Cauchy identity asserts that
\begin{align}\label{eq:CauchySchur}
\sum_{\lambda} s_\lambda(x_1,\ldots,x_n)s_\lambda(y_1,\ldots,y_m) = \prod_{i,j} \frac{1}{1-x_iy_j},
\end{align}
where the sum is over all partitions $\lambda$ of all non-negative integers and $s_\lambda$ is the Schur polynomial. The special case of $m=n$ is of particular interest here. Schur polynomials are related to characters of unitary groups in the following way: Let $g\in U(n)$ with eigenvalues $\alpha_1,\ldots,\alpha_n$ and $\lambda$ be a partition with at most $n$ parts. Then,
\begin{align*}
\chi_\lambda(g) = s_\lambda(\alpha_1,\ldots,\alpha_n),
\end{align*}
where $\chi_\lambda$ is the irreducible character corresponding to $\lambda$.
Thus, the representation theoretical version of the Cauchy identity can be written as
\begin{align}\label{eq:CauchyChar}
\sum_{\lambda} \chi_\lambda(g)\chi_\lambda(h) = \prod_{i=1}^n \prod_{j=1}^n  \frac{1}{1-\alpha_i \beta_j},
\end{align}
where $g\in U(n)$ has eigenvalues $\alpha_1,\ldots,\alpha_n$ and $ h\in U(n)$ has 
eigenvalues $\beta_1,\ldots,\beta_n$. Homogeneity of Schur polynomials yields

\begin{align}\label{eq:CauchyChar2}
\sum_{\lambda} z^{|\lambda|} \chi_\lambda(g)\chi_\lambda(h) = \prod_{i=1}^n \prod_{j=1}^n  \frac{1}{1-z \alpha_i \beta_j}
\end{align}
for any $0< z <1$.

The Cauchy identity (\ref{eq:CauchyChar}) has a conceptual interpretation and generalization from a representation theoretical point of view. This is sketched briefly following \citet[ch. 38]{bump2004lie}. Let $\text{Mat}_n(\mb{C})$ be the set of all $n\times n$ complex matrices and $\Sigma^\circ$, the ring of polynomials on $\text{Mat}_n(\mb{C})$. Define the action $\Pi^\circ$ of $U(n)\times U(n)$ on $\Sigma^\circ$ as follows. For $(g,h) \in U(n)\times U(n)$ and $f\in \Sigma^\circ$ define
\begin{align*}
\left(\Pi^\circ (g,h)f\right) (x) = f(g^t x h).
\end{align*}
Then, as a $U(n)\times U(n)$ representation, $\Pi^\circ$ decomposes into irreducible representations as
\begin{align}\label{CauchyCategorify}
\Pi^\circ \cong \sum_\lambda \pi_\lambda \otimes \pi_\lambda,
\end{align}
where $\lambda$ runs through all partitions of length $\le n$, and $\pi_\lambda$ is the corresponding irreducible representation of $U(n)$. Taking traces on both sides of (\ref{CauchyCategorify}) (in a proper sense of trace for operators on infinite dimensional spaces) yields the Cauchy identity (\ref{eq:CauchyChar}).
For details of this and an excellent textbook treatment of the Cauchy identity, see \citet[ch. 38]{bump2004lie}. 
A symmetric function theoretical point of view has been detailed in \citet[ch. 1]{macdonald1995symmetric}.

\section{Generalization of the Cauchy Identity}\label{sec:CauchyIdentity}
The identities for classical groups are presented based on the type of root systems. 

\subsection{Type B}
This section gives a Cauchy identity for the compact classical group of type B, i.e.\ special orthogonal group in odd dimensions. 
The result for the full orthogonal group, $O(2m+1)$, will follow from this. Before stating the result, 
recall that irreducible representations of $\mathit{SO}(2m+1)$ are labeled by partitions $\lambda$, of 
non-negative integers, with at most $m$ parts, see e.g.\ Proposition 3.1.20 in \citet[ch. 3]{goodman2009symmetry}. Any 
element $g\in \mathit{SO}(2m+1)$ has an eigenvalue equal to one, and the rest of eigenvalues come 
in conjugate pairs. The character $so_\lambda$ corresponding to partition $\lambda$ is given 
explicitly by the Weyl Character Formula \cite[ch. 7]{goodman2009symmetry} as follows:
\begin{align}
so_{\lambda}(g) = \frac{\det\left(x_{i}^{\lambda_j+m-j+\frac{1}{2}}-x_{i}^
{-(\lambda_j+m-j+\frac{1}{2})} \right)}{\det\left(x_{i}^{m-j+\frac{1}{2}}-x_{i}^{-(m-j+\frac{1}{2})} \right)},
\end{align}
 where $x_1,x_1^{-1},\ldots, x_m , x_m^{-1}, 1$ are eigenvalues of $g$.
We are ready to state our first theorem:
\begin{theorem}[\textsc{Cauchy identity for $\mathit{SO}(2m+1)$}]\label{thm:SOodd}
Let $so_{\lambda}$ be the character of $\mathit{SO}(2m+1)$ corresponding to the partition $\lambda$, and $g,h \in \mathit{SO}(2m+1)$ with eigenvalues equal to $x_1,x_1^{-1},\ldots,x_m,x_m^{-1},1$ and $y_1,y_1^{-1},\ldots,y_m,y_m^{-1},1$, respectively. Then,

\begin{align}\label{eq:CauchySOodd}
\sum_{\lambda}\; z^{|\lambda|} so_{\lambda}(g)so_{\lambda}(h) &= \frac{(1-z)^m \det\left(C\right)}{z^{\binom{m}{2}}\prod_{i<j} \left(y_i+y_i^{-1}-(y_j+y_j^{-1})\right) \prod_{i<j} \left(x_i+x_i^{-1}-(x_j+x_j^{-1})\right)}
\end{align}

where $\lambda$ runs over all partitions of non-negative integers with at most $m$ parts, $0< z <1$ is an arbitrary parameter and $C$ is an $m\times m$ matrix defined as
\begin{align*}
C_{ij} = \frac{(1+z)^2+z(x_i+x_i^{-1}+y_j+y_j^{-1})}{(1-z x_i y_j)(1-z x_i^{-1} y_j)(1-z x_i y_j^{-1})(1-z x_i^{-1} y_j^{-1})}.
\end{align*}
\end{theorem}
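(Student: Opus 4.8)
The plan is to follow the standard determinantal route to Cauchy identities, combining the Weyl character formula with the Cauchy--Binet formula. First I would substitute the Weyl character formula into the left-hand side. Writing $a_j = \lambda_j + m - j + \tfrac12$ and $\phi_a(x) = x^a - x^{-a}$, the numerator of $so_\lambda(g)$ is $\det(\phi_{a_j}(x_i))$ and the denominator is the type-B Weyl denominator $D(x) = \det(\phi_{m-j+1/2}(x_i))$. As $\lambda$ ranges over partitions with at most $m$ parts, the tuple $a_1 > \cdots > a_m$ ranges over all $m$-element subsets of the positive half-integers $H = \{\tfrac12,\tfrac32,\dots\}$. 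Since $|\lambda| = \sum_j a_j - \tfrac{m^2}{2}$, the weight becomes $z^{|\lambda|} = z^{-m^2/2}\prod_j z^{a_j}$, so the whole sum equals $\tfrac{z^{-m^2/2}}{D(x)D(y)}\sum_{S}\bigl(\prod_{a\in S}z^a\bigr)\det(\phi_a(x_i))_{a\in S}\det(\phi_a(y_k))_{a\in S}$, the sum being over $m$-subsets $S\subseteq H$.

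Next I would recognize this last sum as a single determinant via Cauchy--Binet applied to the (formally infinite) matrices $A_{ia}=\phi_a(x_i)$ and $B_{ak}=z^a\phi_a(y_k)$: it equals $\det(M)$ with $M_{ik} = \sum_{a\in H} z^a \phi_a(x_i)\phi_a(y_k)$. Expanding $\phi_a(x_i)\phi_a(y_k)$ into four monomials and summing each over the odd half-integers as a geometric series (convergent since $0<z<1$ and the eigenvalues lie on the unit circle) gives a closed form for $M_{ik}$. The crux is to show, after placing the four resulting fractions over the common denominator $(1-zx_iy_k)(1-zx_i^{-1}y_k)(1-zx_iy_k^{-1})(1-zx_i^{-1}y_k^{-1})$, that the numerator collapses to $(x_i^{1/2}-x_i^{-1/2})(y_k^{1/2}-y_k^{-1/2})\bigl[(1+z)^2 + z(x_i+x_i^{-1}+y_k+y_k^{-1})\bigr]$ times $(1-z)\sqrt z$; that is, $M_{ik} = (1-z)\sqrt z\,(x_i^{1/2}-x_i^{-1/2})(y_k^{1/2}-y_k^{-1/2})\,C_{ik}$.

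I expect this entrywise simplification to be the main obstacle. The individual geometric sums carry half-integer powers $x_i^{1/2},y_k^{1/2}$, and the hard part will be checking that after expansion the terms regroup into the row factor $s_i-s_i^{-1}$ and column factor $t_k-t_k^{-1}$ (with $s_i=x_i^{1/2}$, $t_k=y_k^{1/2}$) and the advertised symmetric numerator $(1+z)^2+z(x_i+x_i^{-1}+y_k+y_k^{-1})$, so that all square roots either cancel or factor cleanly out of rows and columns. Granting this, the rest is bookkeeping: pulling the scalar $(1-z)\sqrt z$ out of each of the $m$ columns and the factors $s_i-s_i^{-1}$, $t_k-t_k^{-1}$ out of the rows and columns yields $\det(M) = ((1-z)\sqrt z)^m \prod_i(s_i-s_i^{-1})\prod_k(t_k-t_k^{-1})\det(C)$.

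Finally I would dispose of the Weyl denominator by proving the factorization $D(x) = \prod_i(x_i^{1/2}-x_i^{-1/2})\prod_{i<j}\bigl(x_i+x_i^{-1}-(x_j+x_j^{-1})\bigr)$. This follows by writing each entry $x_i^{m-j+1/2}-x_i^{-(m-j+1/2)}$ as $(x_i^{1/2}-x_i^{-1/2})P_{m-j}(x_i+x_i^{-1})$, where $P_n$ is a monic degree-$n$ polynomial (a Chebyshev-type identity), factoring $x_i^{1/2}-x_i^{-1/2}$ from each row, and evaluating the remaining generalized Vandermonde $\det(P_{m-j}(c_i)) = \prod_{i<j}(c_i-c_j)$ with $c_i=x_i+x_i^{-1}$ by column reduction. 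Substituting this factorization for both $D(x)$ and $D(y)$ cancels the $\prod_i(s_i-s_i^{-1})$ and $\prod_k(t_k-t_k^{-1})$ factors, while the scalar powers combine as $z^{-m^2/2}\cdot z^{m/2} = z^{-\binom m2}$, producing exactly the right-hand side of (\ref{eq:CauchySOodd}).
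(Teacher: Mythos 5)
Your proposal is correct and is essentially the paper's own proof run in the opposite direction: the paper starts from the determinant $\det(X)$ with $X_{ij}=\sum_{k\ge 0}z^k(x_i^{k+1/2}-x_i^{-(k+1/2)})(y_j^{k+1/2}-y_j^{-(k+1/2)})$ and expands it via the Leibniz formula into the sum over partitions, which is exactly your Cauchy--Binet step in reverse, and the entrywise geometric-series simplification and the factorization of the type-B Weyl denominator through $\det\bigl((x_j+x_j^{-1})^{m-i}\bigr)$ are identical to yours. All your bookkeeping (the exponent $z^{-m^2/2}\cdot z^{m/2}=z^{-\binom m2}$ and the extraction of $(1-z)\sqrt z\,(x_i^{1/2}-x_i^{-1/2})(y_k^{1/2}-y_k^{-1/2})$ from $M_{ik}$) checks out against the paper's formulas.
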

\begin{proof}
The proof builds on the classical Cauchy identity in \citet[pg. 34]{macdonald1995symmetric}.
Let $x_1,\ldots,x_m$ and $y_1,\ldots,y_m$ be two sets of variables. Consider the matrix $X$ defined as
\begin{align*}
X_{ij} = \sum_{k\ge 0} z^k (x_i^{k+\frac{1}{2}}-x_i^{-(k+\frac{1}{2})})(y_j^{k+\frac{1}{2}}-y_j^{-(k+\frac{1}{2})}).
\end{align*}
The $(i,j)$ entry, $X_{ij}$, can be simplified as follows

\begin{align*}
 X_{ij} &= \sum_k z^k (x_i^{k+\frac{1}{2}}-x_i^{-(k+\frac{1}{2})})(y_j^{k+\frac{1}{2}}-y_j^{-(k+\frac{1}{2})})\\
 			&= x_i^{\frac{1}{2}}y_j^{\frac{1}{2}}\sum_k z^k x_i^k y_j^k - x_i^{\frac{-1}{2}}y_j^{\frac{1}{2}}\sum_k z^k x_i^{-k} y_j^k-x_i^{\frac{1}{2}}y_j^{\frac{-1}{2}}\sum_k z^k x_i^k y_j^{-k}+x_i^{\frac{-1}{2}}y_j^{\frac{-1}{2}}\sum_k z^k x_i^{-k} y_j^{-k}\\
 			&= \frac{x_i^{\frac{1}{2}}y_j^{\frac{1}{2}}}{1-z x_i y_j}-\frac{x_i^{\frac{-1}{2}}y_j^{\frac{1}{2}}}{1-z x_i^{-1} y_j}-\frac{x_i^{\frac{1}{2}}y_j^{\frac{-1}{2}}}{1-z x_i y_j^{-1}}+\frac{x_i^{\frac{-1}{2}}y_j^{\frac{-1}{2}}}{1-z x_i^{-1} y_j^{-1}},
\end{align*} 

which simplifies to
\begin{align}\label{eq:EntriesX}
X_{ij}= \frac{(x_i^{\frac{1}{2}}-x_i^{\frac{-1}{2}})(y_j^{\frac{1}{2}}-y_j^{\frac{-1}{2}})
(1-z)[(1+z)^2+z(x_i+x_i^{-1}+y_j+y_j^{-1})]}{(1-z x_i y_j)(1-z x_i^{-1} y_j)(1-z x_i y_j^{-1})(1-z x_i^{-1} y_j^{-1})}.
\end{align}
This shows the relation between $X$ and the determinant on the right hand side of identity 
(\ref{eq:CauchySOodd}). Now expand $\det(X)$ as follows
\begin{align*}
\det(X) &= \det\left( \sum_{k\ge 0} z^k (x_i^{k+\frac{1}{2}}-x_i^{-(k+\frac{1}{2})})
(y_j^{k+\frac{1}{2}}-y_j^{-(k+\frac{1}{2})}) \right)\\
           &= \sum_{\pi \in S_m} sign(\pi) \prod_{i\le m}\left(\sum_{k\ge 0}
            z^k (x_i^{k+\frac{1}{2}}-x_i^{-(k+\frac{1}{2})})(y_{\pi(i)}^{k+\frac{1}{2}}-y_{\pi(i)}^{-(k+\frac{1}{2})})\right).        
\end{align*}
Expanding the product on the right hand side leads to
\begin{align*}
\det(X)  &= \sum_{\pi \in S_m} sign(\pi) \sum_{a_1,a_2,\ldots,a_m\ge 0} \; \prod_{i\le m}z^{a_i} 
\left((x_i^{a_i+\frac{1}{2}}-x_i^{-(a_i+\frac{1}{2})})(y_{\pi(i)}^{a_i+\frac{1}{2}}-y_{\pi(i)}^{-(a_i+\frac{1}{2})})\right).
\end{align*}
Changing the order of summation and using the definition of $m\times m$ determinants yields

\begin{align*}
\det(X) &=  \sum_{a_1,a_2,\ldots,a_m\ge 0} \; z^{\sum_i a_i} \sum_{\pi \in S_m} sign(\pi) 
\prod_{i\le m}\left((x_i^{a_i+\frac{1}{2}}-x_i^{-(a_i+\frac{1}{2})})(y_{\pi(i)}^{a_i+\frac{1}{2}}-y_{\pi(i)}^{-(a_i+\frac{1}{2})})\right)\\
         &= \sum_{a \in\Z_+^m} z^{|a|} \det\left((x_i^{a_i+\frac{1}{2}}-x_i^{-(a_i+\frac{1}{2})})(y_{j}^{a_i+\frac{1}{2}}-y_{j}^{-(a_i+\frac{1}{2})})\right)\\
         &= \sum_{a \in \Z_+^m} z^{|a|} \det\left(y_{j}^{a_i+\frac{1}{2}}-y_{j}^{-(a_i+\frac{1}{2})}\right) \prod_i (x_i^{a_i+\frac{1}{2}}-x_i^{-(a_i+\frac{1}{2})}).
\end{align*}

The last equality can be rewritten as follows by ordering the $m$-tuple $a\in \Z^m$.

\begin{align*}
\det(X) &= \sum_{a_1 \ge \ldots \ge a_m\ge 0}\; \sum_{\sigma\in S_m} z^{|a|} 
\det\left(y_{j}^{a_i+\frac{1}{2}}-y_{j}^{-(a_i+\frac{1}{2})}\right) sign(\sigma) 
\prod_i (x_i^{a_{\sigma(i)}+\frac{1}{2}}-x_i^{-(a_{\sigma(i)}+\frac{1}{2})})\\
          &= \sum_{a_1 \ge \ldots \ge a_m\ge 0}\;  z^{|a|} \det\left(y_{j}^{a_i+\frac{1}{2}}-y_{j}^{-(a_i+\frac{1}{2})}\right)
           \sum_{\sigma\in S_m}sign(\sigma) \prod_i (x_i^{a_{\sigma(i)}+\frac{1}{2}}-x_i^{-(a_{\sigma(i)}+\frac{1}{2})})\\
          &= \sum_{a_1 \ge \ldots \ge a_m\ge 0}\; z^{|a|} \det\left(y_{j}^{a_i+\frac{1}{2}}-y_{j}^{-(a_i+\frac{1}{2})}\right) 
          \det\left(x_{j}^{a_i+\frac{1}{2}}-x_{j}^{-(a_i+\frac{1}{2})}\right).
\end{align*}

Note that if two of $a_i$'s are equal then both determinants on the right hand side are zero. Thus, it may be 
assumed that $a_1 > a_2> \ldots > a_m \ge 0$. This is equivalent to the condition that $a_i -(m-i)$ 
is a non-increasing sequence. Define $\lambda_i = a_i -m +i$, then $\lambda_1 \ge \ldots \ge \lambda_m\ge 0$. 
The last equality translates to the following in terms of $\lambda$:
\begin{align*}
\det(X) &= \sum_{\lambda_1 \ge \ldots \ge \lambda_m \ge 0}\; z^{|\lambda|+\binom{m}{2}} \det\left(A^\lambda\right) \det\left(B^\lambda\right),\\
A_{i,j}^\lambda &=y_{j}^{\lambda_i+m-i+\frac{1}{2}}-y_{j}^{-(\lambda_i+m-i+\frac{1}{2})},\\
B_{i,j}^\lambda &= x_{j}^{\lambda_i+m-i+\frac{1}{2}}-x_{j}^{-(\lambda_i+m-i+\frac{1}{2})}.
\end{align*}
Assume  $x_1,\ldots,x_m$ and $y_1,\ldots,y_m$ are such that there exist $g,h \in \mathit{SO}(2m+1)$ with 
eigenvalues equal to $x_1,x_1^{-1},\ldots,x_m,x_m^{-1},1$ and  $y_1,y_1^{-1},\ldots,y_m,y_m^{-1},1$, 
respectively. Dividing by a factor of the following form
\begin{align*}
z^{\binom{m}{2}} \det\left(y_{j}^{m-i+\frac{1}{2}}-y_{j}^{-(m-i+\frac{1}{2})}\right) \det\left(x_{j}^{m-i+\frac{1}{2}}-x_{j}^{-(m-i+\frac{1}{2})}\right),
\end{align*}
and using the Weyl character formula yields
\begin{align}\label{eq:SOCauchyIntermidiate}
\sum_{\lambda}\; z^{|\lambda|} so_{\lambda}(g)so_{\lambda}(h) =
\frac{z^{-\binom{m}{2}}\det(X)}{ \det\left(y_{j}^{m-i+\frac{1}{2}}-y_{j}^{-(m-i+\frac{1}{2})}\right) 
\det\left(x_{j}^{m-i+\frac{1}{2}}-x_{j}^{-(m-i+\frac{1}{2})}\right)},
\end{align}
where $\lambda$ runs over all partitions of an arbitrary integer with at most $m$ parts. To finish the proof, the right side of (\ref{eq:SOCauchyIntermidiate}) is simplified; from (\ref{eq:EntriesX}),
\begin{align*}
\det(X) &= \det\left( \frac{(x_i^{\frac{1}{2}}-x_i^{\frac{-1}{2}})(y_j^{\frac{1}{2}}-y_j^{\frac{-1}{2}})
(1-z)[(1+z)^2+z(x_i+x_i^{-1}+y_j+y_j^{-1})]}{(1-z x_i y_j)(1-z x_i^{-1} y_j)(1-z x_i y_j^{-1})(1-z x_i^{-1} y_j^{-1})}\right)\\
			&= (1-z)^m \prod_{i} (x_i^{\frac{1}{2}}-x_i^{\frac{-1}{2}})(y_i^{\frac{1}{2}}-y_i^{\frac{-1}{2}})
			\det\left( C\right),
\end{align*}
where $C$ is defined in the statement of the theorem. To get to the final form, one more simplification is needed. Note that
\begin{align*}
x_j^{m-i+\frac{1}{2}}-x_j^{-(m-i+\frac{1}{2})} = (x_j^{\frac{1}{2}}-x_j^{\frac{-1}{2}}) (x_j^{m-i}+x_j^{m-i-1}+\ldots +x_j^{-(m-i)})
\end{align*}
leads to 
\begin{align*}
\det\left(x_{j}^{m-i+\frac{1}{2}}-x_{j}^{-(m-i+\frac{1}{2})}\right) &= \det\left( x_j^{m-i}+x_j^{m-i-1}+\ldots +x_j^{-(m-i)}\right) \prod_{1\le j\le m} (x_j^{\frac{1}{2}}-x_j^{\frac{-1}{2}})\; .
\end{align*}
Subtracting $(i+1)^{th}$ row from the $i^{th}$ row for $i=1,2,\ldots,m-1$ yields
\begin{align*}
\det\left(x_{j}^{m-i+\frac{1}{2}}-x_{j}^{-(m-i+\frac{1}{2})}\right) &= \det\left( x_j^{m-i}+x_j^{-(m-i)}\right) \prod_{1\le j\le m} (x_j^{\frac{1}{2}}-x_j^{\frac{-1}{2}})\\
																										   &= \det\left( (x_j+x_j^{-1})^{m-i}\right) \prod_{1\le j\le m} (x_j^{\frac{1}{2}}-x_j^{\frac{-1}{2}})\\
																										   &= \prod_{i<j} (x_i+x_i^{-1}-(x_j+x_j^{-1})) \prod_{1\le j\le m} (x_j^{\frac{1}{2}}-x_j^{\frac{-1}{2}}),
\end{align*}
where the second equality can be proved by simple row operations, and the last equality is the Vandermonde identity. Simplification of the determinant involving $y$ is identical to the calculations above. Substituting these simplifications in (\ref{eq:SOCauchyIntermidiate}) proves the theorem.
\end{proof}
\remark It is noteworthy that unlike the unitary group, the characters $o_\lambda$ 
of the orthogonal group are not homogeneous functions of the eigenvalues. 
This perhaps is the reason that the Cauchy identity is much more complicated for the orthogonal group than for the unitary group.

 With Theorem \ref{thm:SOodd} in hand, a Cauchy identity for the orthogonal group $O(2m+1)$ is a straightforward corollary . 
 \begin{cor}
 [\textsc{Cauchy identity for $O(2m+1)$}]\label{thm:CauchyOodd}
Let $o_{\lambda}$ be the character of $O(2m+1)$ corresponding to the partition $\lambda$, and $g,h \in O(2m+1)$ with eigenvalues equal to $x_1,x_1^{-1},\ldots,x_m,x_m^{-1},det(g)$ and $y_1,y_1^{-1},\ldots,y_m,y_m^{-1},det(h)$, respectively, then
\begin{align}\label{eq:CauchyOodd}
\sum_{\lambda}\; z^{|\lambda|} o_{\lambda}(g)o_{\lambda}(h) &= \frac{(1-det(gh)z)^m det\left( \frac{(1+det(gh)z)^2+z(det(h)(x_i+x_i^{-1})+det(g)(y_j+y_j^{-1}))}{(1-z x_i y_j)(1-z x_i^{-1} y_j)(1-z x_i y_j^{-1})(1-z x_i^{-1} y_j^{-1})}\right)}{z^{\binom{m}{2}}\prod_{i<j} \left(y_i+y_i^{-1}-(y_j+y_j^{-1})\right) \prod_{i<j} \left(x_i+x_i^{-1}-(x_j+x_j^{-1})\right)},
\end{align}
where $\lambda$ runs over all partitions of arbitrary integers with at most $m$ parts.
\end{cor}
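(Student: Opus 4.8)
The plan is to deduce the corollary directly from Theorem \ref{thm:SOodd} by exploiting the fact that in odd dimensions $O(2m+1)$ splits as a direct product $O(2m+1) \cong \mathit{SO}(2m+1) \times \{\pm I\}$: the element $-I$ is central and has determinant $(-1)^{2m+1} = -1$, so $-I \notin \mathit{SO}(2m+1)$. Every $g \in O(2m+1)$ can thus be written as $g = \det(g)\cdot(\det(g)\,g)$ with $\det(g)\,g \in \mathit{SO}(2m+1)$; if $g$ has eigenvalues $x_1, x_1^{-1},\ldots,x_m,x_m^{-1},\det(g)$, then $\det(g)\,g$ has eigenvalues $\det(g)x_1,\det(g)x_1^{-1},\ldots,\det(g)x_m,\det(g)x_m^{-1},1$, i.e.\ the standard $\mathit{SO}(2m+1)$ pattern. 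The corollary will then follow by feeding these rescaled eigenvalues into (\ref{eq:CauchySOodd}).

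The key step is to establish the character identity
\begin{align*}
o_\lambda(g) = (\det g)^{|\lambda|}\, so_\lambda(\det(g)\,g), \qquad g \in O(2m+1).
\end{align*}
Under the standard labelling, $o_\lambda$ is the irreducible $O(2m+1)$-representation occurring in the $|\lambda|$-th tensor power of the defining representation $V = \mathbb{C}^{2m+1}$; its restriction to $\mathit{SO}(2m+1)$ is the irreducible $\pi_\lambda$ (so $o_\lambda(g) = so_\lambda(g)$ when $\det g = 1$, consistent with the identity since then $(\det g)^{|\lambda|} = 1$ and $\det(g)\,g = g$), and the central element $-I$ acts on it by $(-1)^{|\lambda|}$, because $-I$ acts by that scalar on all of $V^{\otimes|\lambda|}$. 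Since $-I$ is central, $o_\lambda((-I)h) = (-1)^{|\lambda|}o_\lambda(h)$ for every $h$; taking $h = -g$ for $g$ with $\det g = -1$ (so that $-g \in \mathit{SO}(2m+1)$ and $(-I)(-g) = g$) gives $o_\lambda(g) = (-1)^{|\lambda|} so_\lambda(-g)$, which is exactly the claim. This identity is the crux of the argument, and the place where essentially all the content lies; the remaining steps are algebraic bookkeeping.

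With the identity in hand, I would substitute into the generating function:
\begin{align*}
\sum_\lambda z^{|\lambda|} o_\lambda(g)\, o_\lambda(h) = \sum_\lambda \big(\det(gh)\,z\big)^{|\lambda|}\, so_\lambda(\det(g)\,g)\, so_\lambda(\det(h)\,h),
\end{align*}
which is precisely the left-hand side of Theorem \ref{thm:SOodd} for the $\mathit{SO}(2m+1)$ elements $\det(g)\,g$ and $\det(h)\,h$, with parameter $z$ replaced by $\det(gh)\,z$ and positive eigenvalues $\det(g)x_i$ and $\det(h)y_j$. Applying (\ref{eq:CauchySOodd}) and simplifying with $\det(g)^2 = \det(h)^2 = 1$ yields the stated formula: in each denominator factor the signs cancel, e.g.\ $1 - (\det(gh)z)(\det(g)x_i)(\det(h)y_j) = 1 - z x_i y_j$; the numerator of the $C$-entries becomes $(1 + \det(gh)z)^2 + z(\det(h)(x_i + x_i^{-1}) + \det(g)(y_j + y_j^{-1}))$; the prefactor becomes $(1 - \det(gh)z)^m$; and in the Vandermonde-type products and the $z^{\binom{m}{2}}$ factor each of $\det(g),\det(h)$ occurs to the even power $2\binom{m}{2}$, so the accumulated sign $(\det(g)\det(h))^{2\binom{m}{2}}$ equals $1$ and the denominator is unchanged. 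The only technical point is that when $\det(gh) = -1$ the substituted parameter $\det(gh)\,z$ lies in $(-1,0)$ rather than $(0,1)$; since both sides of (\ref{eq:CauchySOodd}) are rational functions of the parameter agreeing on $(0,1)$, the identity persists there by analytic continuation, so the substitution is legitimate.
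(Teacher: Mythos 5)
Your proposal is correct, but it reaches the corollary by a genuinely different route than the paper. The paper does not reduce to Theorem \ref{thm:SOodd} as a black box: it writes down the Weyl character formula for $O(2m+1)$ with the $\det(g)$ twist inserted, uses the relation $o_{\tilde{\lambda}}=\det(g)\,o_{\lambda}$ to justify restricting the sum to partitions with at most $m$ parts, and then declares that ``the rest of the proof is identical to that in Theorem \ref{thm:SOodd}'' --- i.e.\ it re-runs the whole determinant expansion with the modified matrix $X$. You instead exploit the direct-product structure $O(2m+1)\cong \mathit{SO}(2m+1)\times\{\pm I\}$ to prove the pointwise identity $o_\lambda(g)=(\det g)^{|\lambda|}so_\lambda(\det(g)g)$ (correctly pinning down the scalar by which $-I$ acts via the occurrence of $o_\lambda$ in $V^{\otimes|\lambda|}$), and then obtain the corollary from the \emph{statement} of (\ref{eq:CauchySOodd}) by the substitution $z\mapsto\det(gh)z$, $x_i\mapsto\det(g)x_i$, $y_j\mapsto\det(h)y_j$; your sign bookkeeping in the numerator, prefactor, and Vandermonde products checks out. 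This buys a cleaner, more conceptual argument with no computation repeated, and it explains structurally where each $\det$ in (\ref{eq:CauchyOodd}) comes from; the price is the extra continuation step to negative parameter values, which the paper's re-run computation never needs. One small imprecision there: the left side of (\ref{eq:CauchySOodd}) is not itself a rational function of $z$, so you should phrase the extension as an identity-theorem argument --- both sides are analytic on the open unit disk (the series converges absolutely for $|z|<1$ since the character values grow polynomially in $|\lambda|$, and the rational right side has poles only on the unit circle) and agree on $(0,1)$, hence agree at $\det(gh)z\in(-1,0)$. With that rewording the argument is complete.
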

\begin{proof}
Let $n=2m+1$. Recall that the irreducible representations of $O(n)$ are 
labeled by partitions $\lambda$ of arbitrary non-negative integers for which 
$\lambda_1^\prime+\lambda_2^\prime \le n$, where $\lambda^\prime$ is the transpose of $\lambda$ 
defined as $\lambda^{\prime}_i = \# \{j: \lambda_j \ge i\}$ (see \citet[ch. 10]{goodman2009symmetry}). 
For a partition $\lambda$ with $\lambda^{\prime}_1\le m$, define $\tilde{\lambda}$ as the 
partition with $\tilde{\lambda}^{\prime} = (n-\lambda^{\prime}_1,\lambda^{\prime}_2,..., \lambda^{\prime}_l)$. 
For each partition $\lambda$ with $\lambda^{\prime}_1\le m$, there exist an irreducible 
representation of $O(n)$ labeled by $\lambda$ and one labeled with $\tilde{\lambda}$. Moreover, the Weyl character formula asserts that
\begin{align*}
o_{\lambda}(g) &= \frac{\det\left(x_{i}^{\lambda_j+m-j+\frac{1}{2}}-det(g) x_{i}^{-(\lambda_j+m-j+\frac{1}{2})} \right)}{\det\left(x_{i}^{m-j+\frac{1}{2}}- det(g)x_{i}^{-(m-j+\frac{1}{2})} \right)},\\
o_{\tilde{\lambda}}(g) &= det(g) o_{\lambda}(g).
\end{align*}
The last equality justifies summing only over $\lambda$ with at most $m$ parts. 
The rest of the proof is identical to that in Theorem \ref{thm:SOodd}.
\end{proof}
\subsubsection{Limit as $z\rightarrow 1$} The Identity (\ref{eq:CauchySOodd}) exhibits interesting limiting 
behavior as $z$ tends to $1$. Assume throughout this section that $g$ and $h$ share no common eigenvalues.
The right hand side is equal to zero at $z=1$, 
but the left hand side does not converge at $z=1$, because the series defining $X_{i,j}$ do not converge. 
However, under an extended notion of series convergence, i.e.\ Cesaro $C_\alpha$-summability, 
the left hand side converges to a limit at $z =1$ which coincides with the value of the right hand side. 
\begin{definition}[$C_k$-summability] Given a sequence $\{a_n\}$, define
\begin{align*}
A_n^{(-1)} = a_n,\;\;\; A_n^{(k)} = \sum_{i=1}^n A_i^{(k-1)} \;\;\; k=1,2,\ldots\; .
\end{align*}
Let $E_n^{(\alpha)}$ be the corresponding sequence $A_n^{(\alpha)}$ starting from the initial sequence $e_1=1, e_i =0$ for $i>1$ . The series $\sum_n a_n$ is called $C_\alpha$-summable to the value $s$ if 
\begin{align*}
\lim _{n\rightarrow \infty}\frac{A_n^{(\alpha)}}{E_n^{(\alpha)}} = s .
\end{align*}
\end{definition}
It is known that the geometric series $1+z+z^2+\ldots$ is $C_1$-summable for $|z|=1,\; z\ne 1$ with $C_1$-limit $\frac{1}{1-z}$.
This ensures that the entries $X_{i,j}$ of the matrix $X$ are $C_1$-summable to their corresponding closed forms given in (\ref{eq:EntriesX}). Then, the following lemma implies that all the terms in the expansion of $\det (X)$ are $C_ {2m-1}$-summable. Therefore, the left hand side of (\ref{eq:CauchySOodd}) is $C_{2m-1}$-summable at $z=1$.
\begin{lemma}[Theorem 277 in \cite{knopp1948theory}]\label{lemma1} If $\sum a_n$ is $C_\alpha$-summable to the value $A$ and $\sum b_n$ is $C_\beta$-summable to the value $B$, then their Cauchy product, 
\begin{align*}
\sum c_n = \sum (a_0b_n+a_1b_{n-1}+\ldots+a_nb_0),
\end{align*}
is certainly $C_\gamma$-summable to the value $C = AB$, where $\gamma = \alpha+\beta +1$.
\end{lemma}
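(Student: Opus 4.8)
The plan is to reduce the statement to a single regular (Toeplitz\nobreakdash-type) summability argument by passing to generating functions. First I would record the elementary closed forms attached to the normalizing sequence: iterating the cumulative-sum operation on $e_1=1,\ e_i=0\ (i>1)$ gives
\[
E_n^{(\alpha)} = \binom{n+\alpha-1}{\alpha} \sim \frac{n^\alpha}{\alpha!}\qquad(n\to\infty),
\]
and, more usefully, that in terms of the generating function $a(x)=\sum_n a_n x^n$ the $\alpha$-th iterated partial sum $A_n^{(\alpha)}$ has generating function $a(x)/(1-x)^{\alpha+1}$, since each cumulative sum $A^{(k)}_n=\sum_{i\le n}A^{(k-1)}_i$ amounts to multiplication by $1/(1-x)$.

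With this dictionary the key algebraic identity falls out at once. Because the Cauchy product $\sum c_n$ has generating function $c(x)=a(x)b(x)$, the sequence $C_n^{(\gamma)}$ has generating function
\[
\frac{a(x)b(x)}{(1-x)^{\gamma+1}} = \frac{a(x)}{(1-x)^{\alpha+1}}\cdot\frac{b(x)}{(1-x)^{\beta+1}},
\]
\emph{precisely} when $\gamma+1=(\alpha+1)+(\beta+1)$, i.e.\ $\gamma=\alpha+\beta+1$; this is exactly where the $+1$ in the statement originates. Reading off coefficients shows that $C_n^{(\gamma)}=\sum_i A_i^{(\alpha)} B_{n-i}^{(\beta)}$ is the convolution of the two Ces\`aro-transformed sequences, and the same computation for the unit sequences yields $\sum_i E_i^{(\alpha)} E_{n-i}^{(\beta)} = E_{n-1}^{(\gamma)}$ (the harmless index shift being absorbed by the asymptotics above, since $E_{n-1}^{(\gamma)}/E_n^{(\gamma)}\to 1$).

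Writing $u_i = A_i^{(\alpha)}/E_i^{(\alpha)} \to A$ and $v_j = B_j^{(\beta)}/E_j^{(\beta)} \to B$, I would normalize and rewrite
\[
\frac{C_n^{(\gamma)}}{E_{n-1}^{(\gamma)}} = \sum_i w_{n,i}\, u_i\, v_{n-i}, \qquad w_{n,i} = \frac{E_i^{(\alpha)} E_{n-i}^{(\beta)}}{\sum_j E_j^{(\alpha)} E_{n-j}^{(\beta)}},
\]
where the weights satisfy $w_{n,i}\ge 0$ and $\sum_i w_{n,i}=1$. The asymptotics $E_n^{(\alpha)}\sim n^\alpha/\alpha!$ give, for each fixed $i$, that $w_{n,i}=O(n^{-(\alpha+1)})\to 0$ and $w_{n,n-i}=O(n^{-(\beta+1)})\to 0$, so $(w_{n,i})$ is a regular summability kernel whose mass escapes into the bulk where both indices are large.

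The main work---and the step I expect to be the only real obstacle---is then the standard Toeplitz endpoint estimate. Given $\epsilon>0$, choose $N$ with $|u_i-A|<\epsilon$ and $|v_j-B|<\epsilon$ for all indices exceeding $N$, and split $\sum_i w_{n,i} u_i v_{n-i}$ into the two end blocks $i\le N$ and $i\ge n-N$ together with the bulk $N<i<n-N$. On the bulk, $u_i v_{n-i}$ is within $O(\epsilon)$ of $AB$ while the bulk weight tends to $1$; on each end block there are only $N$ terms, each with $w_{n,i}\to 0$, so---using that convergent sequences are bounded---the end contributions vanish as $n\to\infty$. Letting $n\to\infty$ and then $\epsilon\to 0$ gives $C_n^{(\gamma)}/E_n^{(\gamma)}\to AB$, which is the claim. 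The delicate point is ensuring the endpoint weights genuinely vanish, and this rests on $\gamma=\alpha+\beta+1$ being strictly larger than both $\alpha$ and $\beta$, valid whenever $\alpha,\beta>-1$ and in particular for the nonnegative integer Ces\`aro orders used here.
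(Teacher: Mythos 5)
Your proof is correct. Note that the paper does not actually prove this lemma---it is imported verbatim as Theorem 277 of Knopp's \emph{Theory and Application of Infinite Series} and used as a black box---and your argument (the convolution identity $C_n^{(\gamma)}=\sum_i A_i^{(\alpha)}B_{n-i}^{(\beta)}$ read off from $\tfrac{a(x)b(x)}{(1-x)^{\gamma+1}}=\tfrac{a(x)}{(1-x)^{\alpha+1}}\cdot\tfrac{b(x)}{(1-x)^{\beta+1}}$, followed by a nonnegative Toeplitz weighted-mean limit with vanishing endpoint weights) is essentially the classical proof given in that reference, so there is nothing to compare it against within the paper itself and no gap to report.
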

The following lemma justifies taking the limit $z\rightarrow 1$ on both sides of (\ref{eq:CauchySOodd}). This, together with the discussion above, imply that, for $z \rightarrow 1$, the left hand side converges to the value of the right hand side which is equal to zero.
\begin{lemma}[Theorem 278 in \cite{knopp1948theory}]\label{lemma2} If a power series $f(z) = \sum a_n z^n$ has convergence radius $1$ and is $C_k$-summable to the value $s$ at the point $z = 1$, then
\begin{align*}
f(z) \rightarrow s
\end{align*}
for every mode of approach of $z$ to $1$, in which $z$ remains within an angle of vertex $+1$, bounded by two fixed chords of the unit circle. This certainly includes the case $z\rightarrow 1$ for $z \in [0,1]$.
\end{lemma}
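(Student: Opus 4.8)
The plan is to recognize this as a classical Abelian theorem relating Cesàro $(C,k)$-summability to the angular (Stolz) limit of the associated power series, and to prove it by passing to generating functions. First I would record the two generating-function identities that drive everything. With $E_n^{(k)}$ the normalizing sequence from the definition, repeated partial summation corresponds to multiplication by $(1-z)^{-1}$, so by induction on $k$ one gets $\sum_n E_n^{(k)} z^n = (1-z)^{-(k+1)}$ (whence $E_n^{(k)} = \binom{n+k}{k}$, up to the harmless index shift coming from the paper's convention of starting the sums at $i=1$) and, by the same argument, $\sum_n A_n^{(k)} z^n = f(z)(1-z)^{-(k+1)}$. Both series converge for $|z|<1$, since $(C,k)$-summability forces $A_n^{(k)} = O(n^k)$. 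Rearranging the second identity gives the working formula $f(z) = (1-z)^{k+1}\sum_n A_n^{(k)} z^n$.

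The second step reduces the whole problem to a single error estimate. Writing $A_n^{(k)} = s\,E_n^{(k)} + \epsilon_n$, the hypothesis $A_n^{(k)}/E_n^{(k)} \to s$ is precisely the statement that $\epsilon_n = o(E_n^{(k)})$. Substituting this decomposition into the working formula and using $\sum_n E_n^{(k)} z^n = (1-z)^{-(k+1)}$ collapses the leading term to $s$, leaving
\begin{align*}
f(z) - s = (1-z)^{k+1} \sum_n \epsilon_n z^n .
\end{align*}
It therefore suffices to prove that the right-hand side tends to $0$ as $z\to 1$ within the prescribed angle.

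The third and central step is this estimate, and here the Stolz-angle hypothesis enters decisively. The geometric content of ``$z$ remains within an angle of vertex $+1$ bounded by two fixed chords'' is exactly that the ratio $|1-z|/(1-|z|)$ stays bounded, say by a constant $M$, throughout the approach. Given $\varepsilon>0$, I would choose $N$ with $|\epsilon_n| \le \varepsilon\, E_n^{(k)}$ for $n>N$ and split the sum at $N$. For the tail, the triangle inequality together with $\sum_n E_n^{(k)}|z|^n = (1-|z|)^{-(k+1)}$ gives
\begin{align*}
\left| (1-z)^{k+1} \sum_{n>N} \epsilon_n z^n \right| \le \varepsilon\, |1-z|^{k+1}\!\!\sum_{n>N} E_n^{(k)} |z|^n \le \varepsilon \left( \frac{|1-z|}{1-|z|} \right)^{k+1} \le \varepsilon\, M^{k+1} .
\end{align*}
The head $\sum_{n\le N}\epsilon_n z^n$ is a fixed polynomial, hence bounded near $z=1$, so multiplying it by $(1-z)^{k+1}$ sends its contribution to $0$. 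Consequently $\limsup_{z\to 1}|f(z)-s| \le \varepsilon\, M^{k+1}$ for every $\varepsilon>0$, which forces $f(z)\to s$.

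I expect the only genuine obstacle to be the verification that the chord-bounded angular approach is equivalent to the uniform Stolz bound $|1-z| \le M(1-|z|)$; once that geometric fact is in hand, the head/tail estimate above is entirely routine. The final assertion about $z\in[0,1]$ is then immediate: on the real segment $|1-z| = 1-|z|$, so one may take $M=1$, placing the segment inside every such angle and giving the stated special case for free.
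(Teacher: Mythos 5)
The paper does not prove this lemma at all --- it is quoted as Theorem 278 of Knopp's book --- so there is no internal proof to compare against; your argument is correct and is essentially the classical one given in that source. The chain (generating-function identity $f(z)=(1-z)^{k+1}\sum_n A_n^{(k)}z^n$, the decomposition $A_n^{(k)}=s\,E_n^{(k)}+\epsilon_n$ with $\epsilon_n=o(E_n^{(k)})$, and the head/tail estimate driven by the Stolz bound $|1-z|\le M(1-|z|)$) is exactly the standard proof, and the one point you flag --- that the chord-bounded angle at $+1$ is equivalent to a uniform bound on $|1-z|/(1-|z|)$ --- is a routine geometric verification.
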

\cite{macphail1941cesaro} proved that the series $\sum_n P(n) e^{i n\theta}$ is $C_k$-summable for any polynomial $P$ of degree less than $k$. This allows differentiating both sides of identity (\ref{eq:CauchySOodd}) with respect to $z$. More precisely, write identity (\ref{eq:CauchySOodd}) as
\begin{align*}
\sum_{\lambda}\; z^{\binom{m}{2}+|\lambda|} so_{\lambda}(g)so_{\lambda}(h) &= \frac{(1-z)^m \det\left(C\right)}{\prod_{i<j} \left(y_i+y_i^{-1}-(y_j+y_j^{-1})\right) \prod_{i<j} \left(x_i+x_i^{-1}-(x_j+x_j^{-1})\right)}.
\end{align*}
Differentiating $m$ times with respect to $z$ yields
\begin{align*}
\sum_{\lambda}\; \frac{\left(\binom{m}{2}+|\lambda|\right)!}{\left(\binom{m}{2}+|\lambda|-m\right)!} z^{\binom{m}{2}+|\lambda|-m}so_{\lambda}(g)so_{\lambda}(h) &= \frac{\frac{\partial^m}{\partial z^m}\left[(1-z)^m \det\left(C\right)\right]}{\prod_{i<j} \left(y_i+y_i^{-1}-(y_j+y_j^{-1})\right) \prod_{i<j} \left(x_i+x_i^{-1}-(x_j+x_j^{-1})\right)}.
\end{align*}
Note that the left hand side is Cesaro summable at $z=1$ to the value of the right hand side at $z=1$. This follows from the Leibniz rule, Cesaro summablity of the derivatives of geometric series, and lemma \ref{lemma1} using an argument similar to the one above. Then, applying lemma \ref{lemma2}, and simplification of the right hand side, yields
\begin{align} \label{eq:Z1limit}
 \lim_{z\rightarrow 1}\; \sum_{\lambda}\; \binom{\binom{m}{2}+|\lambda|}{m}\, z^{\binom{m}{2}+|\lambda|-m}so_{\lambda}(g)so_{\lambda}(h) &=\frac{(-1)^m\det\left( \frac{u_i+v_j}{(u_i-v_j)^2}\right)}{\prod_{i<j} \left(u_i-u_j\right) \prod_{i<j} \left(v_i-v_j\right)},
\end{align}
where $u_i = x_i+x_i^{-1}+2$ and $v_j = y_j+y_j^{-1}+2$, and the limit is taken over real number $z\in (0,1)$ approaching $1$. This suggests using the right hand side of (\ref{eq:Z1limit}) for computational purposes.
\subsection{Type C}
The type C root system corresponds to the compact (real) \textit{symplectic} group. 
The statement and proof of the type C Cauchy identity is very similar to that of type B. Irreducible 
representations of the symplectic group of order $m$, $\mli{Sp}(2m)$, are labeled by partitions $\lambda$, of 
non-negative integers, with at most $m$ parts (see Proposition 3.1.20 in \citet[ch. 3]{goodman2009symmetry}). Let $g\in \mli{Sp}(2m)$ have eigenvalues  $x_1,x_1^{-1},\ldots, x_m , x_m^{-1}$. The character $sp_\lambda$ corresponding to partition $\lambda$ is given explicitly by the Weyl Character Formula:
\begin{align}
sp_{\lambda}(g) = \frac{\det\left(x_{i}^{\lambda_j+m-j+1}-x_{i}^
{-(\lambda_j+m-j+1)} \right)}{\det\left(x_{i}^{m-j+1}-x_{i}^{-(m-j+1)} \right)}.
\end{align}
With this notation, the main result of this section is:
\begin{theorem}[\textsc{Cauchy identity for $\mathit{Sp}(2m)$}]\label{thm:Sp}
Let $sp_{\lambda}$ be the character of $\mathit{Sp}(2m)$ corresponding to the partition $\lambda$, and $g,h \in \mathit{Sp}(2m)$ with eigenvalues equal to $x_1,x_1^{-1},\ldots,x_m,x_m^{-1}$ and $y_1,y_1^{-1},\ldots,y_m,y_m^{-1}$, respectively. Then,

\begin{align}\label{eq:CauchySp}
\sum_{\lambda}\; z^{|\lambda|} sp_{\lambda}(g)\;sp_{\lambda}(h) &= \frac{(1-z^2)^m \det\left( \frac{1}{(1-z x_i y_j)(1-z x_i^{-1} y_j)(1-z x_i y_j^{-1})(1-z x_i^{-1} y_j^{-1})}\right)}{z^{\binom{m}{2}}\prod_{i<j} \left(y_i+y_i^{-1}-(y_j+y_j^{-1})\right) \prod_{i<j} \left(x_i+x_i^{-1}-(x_j+x_j^{-1})\right)}
\end{align}

where $\lambda$ runs over all partitions of non-negative integers with at most $m$ parts and $0< z <1$ is an arbitrary parameter.
\end{theorem}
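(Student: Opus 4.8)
The plan is to follow the strategy of the proof of Theorem \ref{thm:SOodd} essentially verbatim, replacing the half-integer shifts $k+\tfrac{1}{2}$ of type B by the integer shifts $k+1$ dictated by the symplectic Weyl character formula. First I would introduce the $m\times m$ matrix $X$ with entries
\[
X_{ij} = \sum_{k\ge 0} z^k\,(x_i^{k+1}-x_i^{-(k+1)})(y_j^{k+1}-y_j^{-(k+1)}),
\]
split each summand into four geometric series in $z\,x_i^{\pm 1}y_j^{\pm 1}$, and sum them. The resulting rational function is the object to be put in closed form; I expect it to collapse to
\[
X_{ij} = \frac{(x_i-x_i^{-1})(y_j-y_j^{-1})(1-z^2)}{(1-z x_i y_j)(1-z x_i^{-1} y_j)(1-z x_i y_j^{-1})(1-z x_i^{-1} y_j^{-1})},
\]
which already exhibits the matrix on the right-hand side of (\ref{eq:CauchySp}) once the row factor $x_i-x_i^{-1}$, the column factor $y_j-y_j^{-1}$, and the scalar $(1-z^2)$ are extracted, producing the prefactor $(1-z^2)^m$.

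Second, I would expand $\det(X)$ exactly as in type B: write it as a signed sum over $S_m$, expand each geometric series with indices $a_1,\ldots,a_m\ge 0$, interchange the order of summation, and recognize the inner signed sum as a determinant. Restricting to strictly decreasing $a_1>\cdots>a_m\ge 0$ (equal indices kill the determinant) and setting $\lambda_i = a_i-m+i$ turns the sum into a sum over partitions $\lambda$ with at most $m$ parts, producing a factor $z^{|\lambda|+\binom{m}{2}}$ together with the two determinants $\det(x_j^{\lambda_i+m-i+1}-x_j^{-(\lambda_i+m-i+1)})$ and its $y$-analogue. These are precisely the numerators of $sp_\lambda(g)$ and $sp_\lambda(h)$ in the Weyl character formula.

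Third, I would divide by $z^{\binom{m}{2}}$ times the two Weyl denominators, invoke the Weyl character formula to identify the left-hand side of (\ref{eq:CauchySp}), and then simplify the denominators. Using $x_j^{m-i+1}-x_j^{-(m-i+1)} = (x_j-x_j^{-1})(x_j^{m-i}+x_j^{m-i-2}+\cdots+x_j^{-(m-i)})$, I would pull out $\prod_j(x_j-x_j^{-1})$ and reduce the remaining matrix, whose entries are monic polynomials of degree $m-i$ in $x_j+x_j^{-1}$, to Vandermonde form by row operations, exactly as in type B. The factors $\prod_i(x_i-x_i^{-1})$ and $\prod_j(y_j-y_j^{-1})$ then cancel against those extracted from $\det(X)$, leaving the stated right-hand side.

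The main obstacle is the entry simplification in the first step: one must verify that the extra polynomial factor $[(1+z)^2+z(x_i+x_i^{-1}+y_j+y_j^{-1})]$ appearing in the type B computation (\ref{eq:EntriesX}) does not survive here, so that the numerator collapses cleanly to $(x_i-x_i^{-1})(y_j-y_j^{-1})(1-z^2)$. Grouping the four terms into the pairs $(1,4)$ and $(2,3)$ and writing everything over the common denominator renders the numerator a difference of two expressions in $P = x_iy_j+x_i^{-1}y_j^{-1}$ and $Q = x_iy_j^{-1}+x_i^{-1}y_j$; the cross terms cancel and the difference factors as $(P-Q)(1-z^2) = (x_i-x_i^{-1})(y_j-y_j^{-1})(1-z^2)$, which is exactly the identity needed. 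Everything else is a routine transcription of the type B argument, and this is what makes the symplectic determinant entries simpler than their orthogonal counterparts.
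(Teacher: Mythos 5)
Your proposal is correct and follows essentially the same route as the paper's own proof: the same matrix $X_{ij}=\sum_{k\ge 0}z^k(x_i^{k+1}-x_i^{-(k+1)})(y_j^{k+1}-y_j^{-(k+1)})$, the same closed form (\ref{eq:EntriesXSp}), the same determinant expansion and reindexing by $\lambda_i=a_i-m+i$, and the same Vandermonde reduction of the Weyl denominators. Your verification that the numerator collapses to $(x_i-x_i^{-1})(y_j-y_j^{-1})(1-z^2)$ via the pairing $P=x_iy_j+x_i^{-1}y_j^{-1}$, $Q=x_iy_j^{-1}+x_i^{-1}y_j$ checks out and fills in the one computation the paper leaves implicit.
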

\begin{proof}
Let $x_1,\ldots,x_m$ and $y_1,\ldots,y_m$ be two sets of variables. Consider the matrix $X$ defined as
\begin{align*}
X_{ij} = \sum_{k\ge 0} z^k (x_i^{k+1}-x_i^{-(k+1)})(y_j^{k+1}-y_j^{-(k+1)}).
\end{align*}
Write
\begin{align}\label{eq:EntriesXSp}
X_{ij}= \frac{(1-z^2)(x_i-x_i^{-1})(y_j-y_j^{-1})}{(1-z x_i y_j)(1-z x_i^{-1} y_j)(1-z x_i y_j^{-1})(1-z x_i^{-1} y_j^{-1})}.
\end{align}
Expand $\det(X)$ as follows
\begin{align*}
\det(X) &= \det\left( \sum_{k\ge 0} z^k (x_i^{k+1}-x_i^{-(k+1)})
(y_j^{k+1}-y_j^{-(k+1)}) \right)\\
           &= \sum_{\pi \in S_m} sign(\pi) \prod_{i\le m}\left(\sum_{k\ge 0}
            z^k (x_i^{k+1}-x_i^{-(k+1)})(y_{\pi(i)}^{k+1}-y_{\pi(i)}^{-(k+1)})\right).        
\end{align*}
Expanding the product on the right hand side leads to
\begin{align*}
\det(X)  &= \sum_{\pi \in S_m} sign(\pi) \sum_{a_1,a_2,\ldots,a_m\ge 0} \; \prod_{i\le m}z^{a_i} 
\left((x_i^{a_i+1}-x_i^{-(a_i+1)})(y_{\pi(i)}^{a_i+1}-y_{\pi(i)}^{-(a_i+1)})\right).
\end{align*}
Changing the order of summation and using the definition of $m\times m$ determinant yields

\begin{align*}
\det(X) &=  \sum_{a_1,a_2,\ldots,a_m\ge 0} \; z^{\sum_i a_i} \sum_{\pi \in S_m} sign(\pi) 
\prod_{i\le m}\left((x_i^{a_i+1}-x_i^{-(a_i+1)})(y_{\pi(i)}^{a_i+1}-y_{\pi(i)}^{-(a_i+1)})\right)\\
         &= \sum_{a \in \Z_+^m} z^{|a|} \det\left(y_{j}^{a_i+1}-y_{j}^{-(a_i+1)}\right) \prod_i (x_i^{a_i+1}-x_i^{-(a_i+1)}).
\end{align*}

Rewrite the last equality as follows by ordering the $m$-tuple $a\in \Z^m$.

\begin{align*}
\det(X) &= \sum_{a_1 \ge \ldots \ge a_m\ge 0}\; \sum_{\sigma\in S_m} z^{|a|} 
\det\left(y_{j}^{a_i+1}-y_{j}^{-(a_i+1)}\right) sign(\sigma) 
\prod_i (x_i^{a_{\sigma(i)}+1}-x_i^{-(a_{\sigma(i)}+1)})\\
              &= \sum_{a_1 \ge \ldots \ge a_m\ge 0}\; z^{|a|} \det\left(y_{j}^{a_i+1}-y_{j}^{-(a_i+1)}\right) 
          \det\left(x_{j}^{a_i+1}-x_{j}^{-(a_i+1)}\right).
\end{align*}

If two of $a_i$'s are equal then both determinants on RHS are zero. Thus, assume $a_1 > a_2> \ldots > a_m \ge 0$. Define $\lambda_i = a_i -m +i$, then $\lambda_1 \ge \ldots \ge \lambda_m\ge 0$. 
The last equality translates to the following in terms of $\lambda$:
\begin{align*}
\det(X) &= \sum_{\lambda_1 \ge \ldots \ge \lambda_m \ge 0}\; z^{|\lambda|+\binom{m}{2}} \det\left(A^\lambda\right) \det\left(B^\lambda\right),\\
A_{i,j}^\lambda &=y_{j}^{\lambda_i+m-i+1}-y_{j}^{-(\lambda_i+m-i+1)},\\
B_{i,j}^\lambda &= x_{j}^{\lambda_i+m-i+1}-x_{j}^{-(\lambda_i+m-i+1)}.
\end{align*}
Assume  $x_1,\ldots,x_m$ and $y_1,\ldots,y_m$ are such that there exist $g,h \in \mathit{Sp}(2m)$ with 
eigenvalues equal to $x_1,x_1^{-1},\ldots,x_m,x_m^{-1}$ and  $y_1,y_1^{-1},\ldots,y_m,y_m^{-1}$, 
respectively. Dividing by a factor of the following form
\begin{align*}
z^{\binom{m}{2}} \det\left(y_{j}^{m-i+1}-y_{j}^{-(m-i+1)}\right) \det\left(x_{j}^{m-i+1}-x_{j}^{-(m-i+1)}\right),
\end{align*}
and using the Weyl character formula yields
\begin{align}\label{eq:SpCauchyIntermidiate}
\sum_{\lambda}\; z^{|\lambda|} sp_{\lambda}(g)sp_{\lambda}(h) =
\frac{z^{-\binom{m}{2}}\det(X)}{ \det\left(y_{j}^{m-i+1}-y_{j}^{-(m-i+1)}\right) 
\det\left(x_{j}^{m-i+1}-x_{j}^{-(m-i+1)}\right)},
\end{align}
where $\lambda$ runs over all partitions of an arbitrary integer with at most $m$ parts. Using equation (\ref{eq:EntriesXSp})
\begin{align*}
\det(X) &= \det\left( \frac{(x_i-x_i^{-1})(y_j-y_j^{-1})
(1-z^2)}{(1-z x_i y_j)(1-z x_i^{-1} y_j)(1-z x_i y_j^{-1})(1-z x_i^{-1} y_j^{-1})}\right)\\
			&= (1-z^2)^m \prod_{i} (x_i-x_i^{-1})(y_i-y_i^{-1})
			\det\left(  \frac{1}{(1-z x_i y_j)(1-z x_i^{-1} y_j)(1-z x_i y_j^{-1})(1-z x_i^{-1} y_j^{-1})}\right).
\end{align*}
To get to the final form, one more simplification is needed. Note that
\begin{align*}
x_j^{m-i+1}-x_j^{-(m-i+1)} = (x_j-x_j^{-1}) (x_j^{m-i}+x_j^{m-i-2}+\ldots +x_j^{-(m-i)})
\end{align*}
leads to 
\begin{align*}
\det\left(x_{j}^{m-i+1}-x_{j}^{-(m-i+1)}\right) &= \det\left( x_j^{m-i}+x_j^{m-i-2}+\ldots +x_j^{-(m-i)}\right) \prod_{1\le j\le m} (x_j-x_j^{-1})\; .
\end{align*}
Subtracting $(i+2)^{th}$ row from the $i^{th}$ row for $i=1,2,\ldots,m-2$ yields
\begin{align*}
\det\left(x_{j}^{m-i+1}-x_{j}^{-(m-i+1)}\right) &= \det\left( x_j^{m-i}+x_j^{-(m-i)}\right) \prod_{1\le j\le m} (x_j-x_j^{-1})\\
																										   &= \det\left( (x_j+x_j^{-1})^{m-i}\right) \prod_{1\le j\le m} (x_j-x_j^{-1})\\
																										   &= \prod_{i<j} (x_i+x_i^{-1}-(x_j+x_j^{-1})) \prod_{1\le j\le m} (x_j-x_j^{-1}).
\end{align*}
 Substituting these simplifications in (\ref{eq:SpCauchyIntermidiate}) proves the theorem.
\end{proof}
\subsection{Type D}
The situation is more subtle for type D, which corresponds to the special orthogonal groups in even dimensions. Focus on $\mli{SO}(2m)$. The irreducible representations of $\mli{SO}(2m)$ are indexed by sequences of integers $\lambda =
(\lambda_1,\lambda_2,...,\lambda_m)$ and $\lambda^- = (\lambda_1,\lambda_2,...,\lambda_{m-1},- \lambda_m)$, where $\lambda$ is a partition with at most $m$ parts. The Weyl character formula is as follows. Let $x_1,x_1^{-1},\ldots, x_m , x_m^{-1}$ are eigenvalues of $g\in \mli{SO}(2m)$. If $\tilde{\lambda}_1 <m$ , i.e.\ $\lambda_m = 0 $, then
\begin{align}\label{eqn:WeylSOeven1}
so_{\lambda}(g) = \frac{\det\left(x_{i}^{\lambda_j+m-j}+x_{i}^
{-(\lambda_j+m-j)} \right)}{\det\left(x_{i}^{m-j}+x_{i}^{-(m-j)} \right)}.
\end{align}
If $\tilde{\lambda}_1 = m$, i.e.\ $\lambda_m > 0 $, then the character of the irreducible representation corresponding to $\lambda$ is given by
\begin{align}\label{eqn:WeylSOeven2}
so_{\lambda}(g) = \frac{\det\left(x_{i}^{\lambda_j+m-j}+x_{i}^
{-(\lambda_j+m-j)} \right)-\det\left(x_{i}^{\lambda_j+m-j}-x_{i}^
{-(\lambda_j+m-j)} \right)}{2\det\left(x_{i}^{m-j}+x_{i}^{-(m-j)} \right)},
\end{align}
and the one corresponding to $\lambda^-$ is given as
\begin{align}\label{eqn:WeylSOeven3}
so_{\lambda^-}(g) = \frac{\det\left(x_{i}^{\lambda_j+m-j}+x_{i}^
{-(\lambda_j+m-j)} \right)+\det\left(x_{i}^{\lambda_j+m-j}-x_{i}^
{-(\lambda_j+m-j)} \right)}{2\det\left(x_{i}^{m-j}+x_{i}^{-(m-j)} \right)}.
\end{align}
Define $\chi_\lambda$ as $so_\lambda$ if $\tilde{\lambda}_1 <m$, and as $so_\lambda+so_{\lambda^-}$ if $\tilde{\lambda}_1 = m$. Note that
$\chi_\lambda$ is the restriction of the irreducible character of $O(2m)$ to $\mli{SO}(2m)$; it is an irreducible character of $\mli{SO}(2m)$ if and only if $\tilde{\lambda}_1 <m$. We will state the Cauchy identity in terms of $\{\chi_\lambda\}$.
\begin{theorem}[\textsc{Cauchy identity for $\mathit{SO}(2m)$}]\label{thm:SOeven}
Let $\chi_{\lambda}$ be as above, and $g,h \in \mathit{SO}(2m)$ with eigenvalues equal to $x_1,x_1^{-1},\ldots,x_m,x_m^{-1}$ and $y_1,y_1^{-1},\ldots,y_m,y_m^{-1}$, respectively. Then,

\begin{align}\label{eq:CauchySOeven}
\sum_{\lambda}\; z^{|\lambda|} \chi_{\lambda}(g)\chi_{\lambda}(h) &= \frac{\det\left(\frac{1}{1-z x_i y_j}+\frac{1}{1-z x_i^{-1} y_j}+\frac{1}{1-z x_i y_j^{-1}}+\frac{1}{1-z x_i^{-1} y_j^{-1}}\right)}{z^{\binom{m}{2}}\prod_{i<j} \left(y_i+y_i^{-1}-(y_j+y_j^{-1})\right) \prod_{i<j} \left(x_i+x_i^{-1}-(x_j+x_j^{-1})\right)}
\end{align}

where $\lambda$ runs over all partitions of non-negative integers with at most $m$ parts and $0< z <1$ is an arbitrary parameter.
\end{theorem}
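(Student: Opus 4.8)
The plan is to mirror the proofs of Theorems \ref{thm:SOodd} and \ref{thm:Sp}, adapting the construction to the \emph{symmetric} (``plus'') form of the type D Weyl numerator. First I would introduce the $m\times m$ matrix $X$ with entries
\[
X_{ij} = \sum_{k\ge 0} z^k\,(x_i^{k}+x_i^{-k})(y_j^{k}+y_j^{-k}),
\]
the natural type D analogue of the matrices used above, now built from the symmetric power sums $x_i^{k}+x_i^{-k}$ rather than the antisymmetric ones $x_i^{k+c}-x_i^{-(k+c)}$. Splitting into four geometric series gives the closed form $X_{ij} = \frac{1}{1-zx_iy_j}+\frac{1}{1-zx_i^{-1}y_j}+\frac{1}{1-zx_iy_j^{-1}}+\frac{1}{1-zx_i^{-1}y_j^{-1}}$, which is exactly the $(i,j)$ entry of the determinant in the numerator of (\ref{eq:CauchySOeven}). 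Thus $\det(X)$ already equals that numerator, with no scalar prefactor to extract, in contrast to the $(1-z)^m$ and $(1-z^2)^m$ factors appearing in types B and C.

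Next I would expand $\det(X)$ exactly as before: apply the Leibniz formula, expand each geometric factor, interchange the order of summation, and factor the $x$-dependence out of each row to obtain $\det(X)=\sum_{a\in\Z_+^m} z^{|a|}\,\det(y_j^{a_i}+y_j^{-a_i})\prod_i(x_i^{a_i}+x_i^{-a_i})$. Symmetrizing over $S_m$, the terms with repeated $a_i$ vanish, so one restricts to $a_1>\cdots>a_m\ge 0$ and reindexes by $\lambda_i=a_i-m+i$. This produces $\det(X)=\sum_{\lambda} z^{|\lambda|+\binom{m}{2}}\det(A^\lambda)\det(B^\lambda)$, where $A^\lambda_{ij}=y_j^{\lambda_i+m-i}+y_j^{-(\lambda_i+m-i)}$ and $B^\lambda_{ij}=x_j^{\lambda_i+m-i}+x_j^{-(\lambda_i+m-i)}$, with $\lambda$ running over all partitions with at most $m$ parts.

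The step that has no counterpart in types B and C, and which I expect to be the main obstacle, is matching these symmetric determinants to characters. For $\mathit{SO}(2m)$ the Weyl formula is not uniform: when $\lambda_m>0$ the irreducible characters $so_\lambda$ and $so_{\lambda^-}$ each carry both the symmetric determinant and the antisymmetric determinant $\det(x_i^{\lambda_j+m-j}-x_i^{-(\lambda_j+m-j)})$ of (\ref{eqn:WeylSOeven2})--(\ref{eqn:WeylSOeven3}). The key observation is that in the combination $\chi_\lambda=so_\lambda+so_{\lambda^-}$ the antisymmetric determinant cancels, leaving the single expression $\chi_\lambda(g)=\det(B^\lambda)/\det(x_j^{m-i}+x_j^{-(m-i)})$, valid whether or not $\lambda_m=0$ (when $\lambda_m=0$ the antisymmetric determinant already vanishes, since its column indexed by $j=m$ has exponent $0$ and is identically zero). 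It is precisely this cancellation that forces the identity to be phrased in terms of $\chi_\lambda$ rather than the individual irreducibles, and that makes the symmetric matrix $X$ the correct object: its Leibniz expansion produces exactly the symmetric numerator $\det(B^\lambda)$ of $\chi_\lambda$, and never the antisymmetric one.

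With the uniform character formula in hand, I would divide through by $z^{\binom{m}{2}}\det(y_j^{m-i}+y_j^{-(m-i)})\det(x_j^{m-i}+x_j^{-(m-i)})$ and recognize the left side as $\sum_\lambda z^{|\lambda|}\chi_\lambda(g)\chi_\lambda(h)$. It then remains to simplify the Weyl denominators. Here $x_j^{m-i}+x_j^{-(m-i)}$ is a polynomial of degree $m-i$ in $u_j:=x_j+x_j^{-1}$ (a Chebyshev polynomial of the first kind), so elementary row operations collapse the determinant to the Vandermonde $\det(u_j^{m-i})=\prod_{i<j}\bigl(x_i+x_i^{-1}-(x_j+x_j^{-1})\bigr)$, exactly as in the earlier proofs; the only new wrinkle is the degenerate bottom row $m-i=0$, which contributes a constant and must be tracked carefully through the reduction. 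Substituting the simplified denominators for both $x$ and $y$ then yields (\ref{eq:CauchySOeven}).
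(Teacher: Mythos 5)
Your proposal follows the paper's proof of Theorem \ref{thm:SOeven} essentially step for step: the same matrix $X$ built from the symmetric sums $x_i^{k}+x_i^{-k}$, the same Leibniz expansion, reindexing by $\lambda_i=a_i-m+i$, and division by the type D Weyl denominators, followed by the same Chebyshev/Vandermonde reduction. If anything you are more explicit than the paper on the two delicate points --- the cancellation of the antisymmetric determinant in $so_\lambda+so_{\lambda^-}$, which the paper compresses into ``using the definition of $\chi_\lambda$'', and the constant bottom row of $\det\left(x_j^{m-i}+x_j^{-(m-i)}\right)$, whose contribution the paper's passage to $\det\left((x_j+x_j^{-1})^{m-i}\right)$ does not track.
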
%
\begin{proof}
Let $x_1,\ldots,x_m$ and $y_1,\ldots,y_m$ be two sets of variables. Consider the matrix $X$ defined as
\begin{align*}
X_{ij} &= \sum_{k\ge 0} z^k (x_i^{k }+x_i^{-k })(y_j^{k }+y_j^{-k })\\
			&= \frac{1}{1-z x_i y_j}+\frac{1}{1-z x_i^{-1} y_j}+\frac{1}{1-z x_i y_j^{-1}}+\frac{1}{1-z x_i^{-1} y_j^{-1}}.
\end{align*}
Expand $\det(X)$ to get (similar to proof of Theorem \ref{thm:SOodd})
\begin{align*}
\det(X) &= \det\left( \sum_{k\ge 0} z^k (x_i^{k }+x_i^{-k})
(y_j^{k }+y_j^{-k}) \right)\\
           &= \sum_{\pi \in S_m} sign(\pi) \prod_{i\le m}\left(\sum_{k\ge 0}
            z^k (x_i^{k }+x_i^{-k})(y_{\pi(i)}^{k }+y_{\pi(i)}^{-k})\right)\\
             &= \sum_{\pi \in S_m} sign(\pi) \sum_{a_1,a_2,\ldots,a_m\ge 0} \; \prod_{i\le m}z^{a_i} 
\left((x_i^{a_i }+x_i^{-(a_i )})(y_{\pi(i)}^{a_i }+y_{\pi(i)}^{-(a_i )})\right)\\
			&=  \sum_{a_1,a_2,\ldots,a_m\ge 0} \; z^{\sum_i a_i} \sum_{\pi \in S_m} sign(\pi) 
\prod_{i\le m}\left((x_i^{a_i }+x_i^{-(a_i )})(y_{\pi(i)}^{a_i }+y_{\pi(i)}^{-(a_i )})\right)\\
         &= \sum_{a \in \Z_+^m} z^{|a|} \det\left(y_{j}^{a_i }+y_{j}^{-(a_i )}\right) \prod_i (x_i^{a_i }+x_i^{-(a_i )}).
\end{align*}

Rewrite the last equality as follows by ordering the $m$-tuple $a\in \Z^m$.

\begin{align*}
\det(X)  = \sum_{a_1 \ge \ldots \ge a_m\ge 0}\; z^{|a|} \det\left(y_{j}^{a_i }+y_{j}^{-(a_i )}\right) 
          \det\left(x_{j}^{a_i }+x_{j}^{-(a_i )}\right).
\end{align*}

Clearly $a_1 > a_2> \ldots > a_m \ge 0$. Define $\lambda_i = a_i -m +i$, then $\lambda_1 \ge \ldots \ge \lambda_m\ge 0$. 
The last equality translates to the following in terms of $\lambda$:
\begin{align*}
\det(X) &= \sum_{\lambda_1 \ge \ldots \ge \lambda_m \ge 0}\; z^{|\lambda|+\binom{m}{2}} \det\left(A^\lambda\right) \det\left(B^\lambda\right),\\
A_{i,j}^\lambda &=y_{j}^{\lambda_i+m-i }+y_{j}^{-(\lambda_i+m-i )},\\
B_{i,j}^\lambda &= x_{j}^{\lambda_i+m-i }+x_{j}^{-(\lambda_i+m-i )}.
\end{align*}
Assume  $x_1,\ldots,x_m$ and $y_1,\ldots,y_m$ are such that there exist $g,h \in \mathit{SO}(2m)$ with 
eigenvalues equal to $x_1,x_1^{-1},\ldots,x_m,x_m^{-1}$ and  $y_1,y_1^{-1},\ldots,y_m,y_m^{-1}$, 
respectively. Dividing by a factor of the following form
\begin{align*}
z^{\binom{m}{2}} \det\left(y_{j}^{m-i }+y_{j}^{-(m-i )}\right) \det\left(x_{j}^{m-i }+x_{j}^{-(m-i )}\right),
\end{align*}
and using definition of $\chi_\lambda$ yields
\begin{align}\label{eq:SOEvenCauchyIntermidiate}
\sum_{\lambda}\; z^{|\lambda|} \chi_{\lambda}(g) \chi_{\lambda}(h) =
\frac{z^{-\binom{m}{2}}\det(X)}{ \det\left(y_{j}^{m-i }+y_{j}^{-(m-i )}\right) 
\det\left(x_{j}^{m-i }+x_{j}^{-(m-i )}\right)},
\end{align}
where $\lambda$ runs over all partitions of an arbitrary integer with at most $m$ parts.  To get to the final form note that
\begin{align*}
\det\left( x_j^{m-i}+x_j^{-(m-i)}\right)  &= \det\left( (x_j+x_j^{-1})^{m-i}\right)\\
																 &= \prod_{i<j} (x_i+x_i^{-1}-(x_j+x_j^{-1})) .
\end{align*}
Substituting these simplifications in (\ref{eq:SOEvenCauchyIntermidiate}) proves the theorem.
\end{proof}

\remark A similar $z \rightarrow 1$ analysis of Type C and Type D Cauchy identities is possible and gives rise to similar expressions as in Type B case. Carrying out the details is straightforward and omitted here.

\section{Discussion}
Although the identities introduced in the current paper provide a closed formula for an otherwise intractable infinite sum, they are still computationally expensive as they require evaluation of an $m\times m$ determinant for $m $ dimensional problems. Proving any alternative representations of these determinants which result in faster computations would be of significant practical importance. This is an interesting future direction to pursue. Another interesting direction to pursue would be the following. Find a conceptual proof of the Cauchy identity for compact groups similar to the one sketched for the unitary group. Note that an analogue of (\ref{CauchyCategorify}) holds for other compact groups. The main challenge is finding an appropriate basis for the coordinate ring of the group in order to compute the traces on both sides. Of course, any new applications in other parts of mathematics or applied problems are very much appreciated.

\section{Acknowledgments}
The author owes a great deal of gratitude to his doctoral advisor, Professor Persi Diaconis, for introducing him to the subject, suggesting the problem, and for his constant encouragement and support. The author is grateful to Professor Daniel Bump for illuminating discussions, and to Professor Arun Ram for sharing his notes on characters of classical groups and symmetric functions. We would like to thank Dr. Michael Wheeler for bringing to our attention their recently published work \citep{wheeler2016refined}.



\bibliographystyle{elsarticle-harv} 
\bibliography{CauchyIdentity}

\end{document}